\DeclareMathOperator{\aut}{Aut}
\DeclareMathOperator{\cay}{Cay}
\DeclareMathOperator{\cyc}{Cyc}
\DeclareMathOperator{\dev}{dev}
\DeclareMathOperator{\GL}{GL}
\DeclareMathOperator{\id}{id}
\DeclareMathOperator{\iso}{Iso}
\DeclareMathOperator{\sym}{Sym}
\DeclareMathOperator{\alg}{Alg}
\DeclareMathOperator{\dimwl}{dim_{WL}}
\DeclareMathOperator{\WL}{WL}
\DeclareMathOperator{\GCD}{GCD}
\DeclareMathOperator{\ind}{ind}
\DeclareMathOperator{\Hol}{Hol}
\def\r{\mathrm{right}}
\def\tm#1{\item[{\rm (#1)}]}
\def\@seccntformat#1{\csname the#1\endcsname. } 
\def\@biblabel#1{#1.}
\title{On a family of divisible design digraphs}
\author{Mikhail Muzychuk}
\address{Ben Gurion University of the Negev, Beer Sheva, Israel}
\email{muzychuk@bgu.ac.il}
\author{Grigory Ryabov}
\address{School of Mathematical Sciences, Hebei Key Laboratory of Computational Mathematics and Applications, Hebei Normal University, Shijiazhuang 050024, P. R. China}
\address{Ben Gurion University of the Negev, Beer Sheva, Israel}
\email{gric2ryabov@gmail.com}
\thanks{The second author was supported by the grant of The Natural Science Foundation of Hebei Province (project No.~A2023205045) and the grant of The Israel Science Foundation (project No.~87792731)}
\date{}
\newtheorem{prop}{Proposition}[section]
\newtheorem{lemm}[prop]{Lemma}
\newtheorem{theo}[prop]{Theorem}
\newtheorem{corl}[prop]{Corollary}
\theoremstyle{definition}
\newtheorem*{rem1}{Remark~1}
\newtheorem*{rem2}{Remark~2}
\begin{document}

\maketitle

\begin{abstract}
For every odd prime power~$q$, a family of pairwise nonisomorphic normal arc-transitive divisible design Cayley digraphs with isomorphic neighborhood designs over a Heisenberg group of order~$q^3$ is constructed. It is proved that these digraphs are not distinguished by the Weisfeiler-Leman algorithm and have the Weisfeiler-Leman dimension~$3$.
\\
\\
\\
\textbf{Keywords}: divisible design digraphs, Cayley digraphs, Weisfeiler-Leman dimension.
\\
\\
\\
\textbf{MSC}: 05B05, 05C60, 05E30. 
\end{abstract}

\maketitle

\section{Introduction}

Let $\Gamma=(\Omega,D)$ be a digraph (possibly, with loops). We say that a vertex $\alpha\in \Omega$ \emph{dominates} a vertex $\beta\in \Omega$ if $(\alpha,\beta)$ is an arc of $\Gamma$. The digraph $\Gamma$ is said to be \emph{regular} of degree~$k$ if each vertex of $\Gamma$ dominates exactly $k$ vertices and is dominated by exactly $k$ vertices.  The digraph $\Gamma$ is said to be \emph{asymmetric} if $(\beta,\alpha)\notin D$ for all $\alpha,\beta\in \Omega$ such that $\alpha\neq \beta$ and $(\alpha,\beta)\in D$.

A $k$-regular asymmetric digraph on~$v$ vertices is called a \emph{divisible design digraph} (\emph{DDD} for short) with parameters $(v,k,\lambda_1,\lambda_2,m,n)$ if its vertex set can be partitioned into~$m$ classes of size~$n$ such that for any two distinct vertices $\alpha$ and $\beta$, the number of vertices that dominates or being dominated by both $\alpha$ and $\beta$ is equal to $\lambda_1$ if $\alpha$ and $\beta$ belong to the same class and~$\lambda_2$ otherwise. The notion of a divisible design graph was introduced in~\cite{HKM} and extended to digraphs in~\cite{CK}. Several constructions of DDDs can be found in~\cite{CH,CK,HKM,Kabanov,PS}.

DDDs are closely related to symmetric (group) divisible designs (see~\cite{BJL,ScSp} for the definition). More precisely, if $\Gamma$ is a DDD, then an incidence structure whose points are vertices and blocks are out-neighborhoods of vertices of $\Gamma$ is a symmetric divisible design called the \emph{neighborhood design} of $\Gamma$ and denoted by $\mathcal{D}=\mathcal{D}(\Gamma)$. An adjacency matrix of $\Gamma$ coincides with an incidence matrix of $\mathcal{D}$. Observe that even if DDDs $\Gamma_1$ and $\Gamma_2$ are not isomorphic, the neighborhood designs $\mathcal{D}(\Gamma_1)$ and $\mathcal{D}(\Gamma_2)$ can be isomorphic. For example, if $\Gamma_2$ is obtained from $\Gamma_1$ by a dual Seidel switching, then $\mathcal{D}(\Gamma_1)$ and $\mathcal{D}(\Gamma_2)$ are isomorphic, whereas $\Gamma_1$ and $\Gamma_2$ can be nonisomorphic (see~\cite{GHKS}). An example of nonisomorphic DDDs with isomorphic neighborhood designs can be found in~\cite{HKM}. It seems interesting to ask how many pairwise nonisomorphic DDDs may have the same (up to isomorphism) neighborhood design.

In fact, the latter question is a main motivation of this paper. We construct a family of pairwise nonisomorphic Cayley DDDs $\Gamma_i$, $i\in I$, over a Heisenberg group with the same neighborhood design that have such additional properties like normality and arc-transitivity. By a \emph{Cayley digraph} $\Gamma=\cay(G,X)$ over a finite group $G$ with a non-empty connection set $X\subseteq G$, we mean the digraph with vertex set $G$ and arc set $D=\{(g,xg):~x\in X,~g\in G\}$. The automorphism group $\aut(\Gamma)$ contains a regular subgroup $G_\r$ consisting of all right translations of $G$ and $\Gamma$ is said to be \emph{normal} if $G_\r$ is normal in $\aut(\Gamma)$.

The Cayley digraph $\Gamma$ is a DDD with parameters $(v,k,\lambda_1,\lambda_2,m,n)$ if and only if the connection set $X$ of $\Gamma$ is a symmetric \emph{divisible difference set} (\emph{DDS} for short, see~\cite{AJP} for the definition) with parameters $(m,n,k,\lambda_1,\lambda_2)$. In the latter case, the neighborhood design $\mathcal{D}(\Gamma)$ is called a \emph{development} of~$X$ and denoted by~$\dev(X)$. The point and block sets of $\dev(X)$ are $G$ and $\{Xg:~g\in G\}$, respectively. More details on DDSs and Cayley DDDs can be found in~\cite{AJP,BJL} and in~\cite{CKS,CS,KS}, respectively. Our construction of Cayley DDDs is based on a recent construction of DDSs~\cite{MR}.

To check that $\Gamma_i$'s are pairwise nonisomorphic, we compute isomorphisms of a special $S$-ring (Section~$3$) whose automorphism group coincides with an automorphism group of each $\Gamma_i$. This also allows us to establish normality and arc-transitivity of $\Gamma_i$'s. Besides, a knowledge of the above isomorphisms implies some more interesting properties of $\Gamma_i$'s which are discussed further. Clearly, $\Gamma_i$'s have the same parameters because they have the same neighborhood design. However, they possess a stronger similarity from a combinatorial point of view. Namely, the tensors constructed from $\Gamma_i$'s by the Weisfeiler-Leman algorithm~\cite{WeisL} are the same, or, in other words, all $\Gamma_i$'s are pairwise \emph{WL-equivalent}. Nevertheless, each $\Gamma_i$ can be distinguished from any other graph by the tensor constructed from $\Gamma_i$ by the $3$-dimensional Weisfeiler-Leman algorithm, i.e. the \emph{Weisfeiler-Leman dimension} (\emph{WL-dimension} for short) $\dimwl(\Gamma_i)$ of $\Gamma_i$ is at most~$3$. In particular, the isomorphism between $\Gamma_i$ and any other graph can be verified in polynomial time. For a background and details on the Weisfeiler-Leman algorithm and dimension, we refer the readers to~\cite{CP,FKV,Grohe,Weis}.

Given a power~$q$ of a prime~$p$, put $\log(q)=\log_p(q)$. A Heisenberg group (of dimension~$3$) over a finite field $\mathbb{F}_q$ of order~$q$ is denoted by $H_3(q)$. The Euler function of a positive integer~$n$ is denoted by $\phi(n)$. To sum up the whole previous discussion, we formulate the theorem below which is a main result of the paper.

\begin{theo}\label{main}
Let $q$ be an odd prime power. There exist $I\subseteq \mathbb{F}_q$ and a family $\Gamma_i$, $i\in I$, of pairwise nonisomorphic normal arc-transitive divisible design Cayley digraphs with parameters~$(q^3,q^2,0,q,q^2,q)$ over $H_3(q)$ such that:
\begin{enumerate}

\tm{1} $|I|\geq \left[\frac{\phi(q+1)}{2\log(q)}\right]$,

\tm{2} the neighborhood designs $\mathcal{D}(\Gamma_i)$ and $\mathcal{D}(\Gamma_j)$ are isomorphic,

\tm{3} $\Gamma_i$ and $\Gamma_j$ are $\WL$-equivalent,

\tm{4} $\dimwl(\Gamma_i)\leq 3$
 
\end{enumerate}
for all $i,j\in I$.
\end{theo}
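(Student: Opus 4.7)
The plan is to base the construction on the divisible difference set family of \cite{MR} in the Heisenberg group $H_3(q)$. From that family I would extract a parametric collection $\{X_i : i \in I\}$ of symmetric DDSs with parameters $(q^2,q,q^2,0,q)$; the Cayley digraph $\Gamma_i := \cay(H_3(q),X_i)$ is then a DDD with the required parameters by the DDS/DDD correspondence recalled in the introduction. Normality and arc-transitivity are properties of the ambient construction rather than of the individual $\Gamma_i$: I would set up a single Schur $S$-ring $\mathcal{A}$ over $H_3(q)$ for which every $X_i$ is a basic set (this is the role announced for Section~3) and read off $\aut(\Gamma_i) = \aut(\mathcal{A})$. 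Since $H_3(q)_\r \trianglelefteq \aut(\mathcal{A})$ and $\aut(\mathcal{A})$ acts transitively on each basic set, $\Gamma_i$ is automatically normal and arc-transitive, and the classes of the divisible partition should coincide with the cosets of the centre $Z(H_3(q))$ of order~$q$.

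To establish the non-isomorphism of the $\Gamma_i$ and the index bound~(1), I would compute the isomorphism set $\iso(\mathcal{A})$ of the $S$-ring explicitly. Any isomorphism $\Gamma_i \to \Gamma_j$ lifts, via Schur--Wielandt analysis, to an isomorphism of $\mathcal{A}$ that sends the basic set $X_i$ to $X_j$, so non-isomorphism reduces to the $X_i$ lying in distinct orbits of the finite group $\iso(\mathcal{A})$ acting on the parameter set. The bound $|I|\geq \lfloor \phi(q+1)/(2\log(q))\rfloor$ is then to be obtained as an orbit count for a Frobenius-type group of order $2\log(q)$ (the Galois group of $\mathbb{F}_{q^2}/\mathbb{F}_p$ together with an $x\mapsto x^{-1}$ involution) acting on the $\phi(q+1)$ primitive $(q+1)$-th roots of unity that parametrize the construction. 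The isomorphisms of neighborhood designs in~(2) come from the \emph{same} set: elements of $\iso(\mathcal{A})$ moving $X_i$ to $X_j$ automatically carry the blocks of $\dev(X_i)$ bijectively onto the blocks of $\dev(X_j)$. Claim~(3) is then immediate, because the $2$-dim WL algorithm applied to $\Gamma_i$ stabilizes to the coherent configuration associated with $\mathcal{A}$, which does not depend on $i$.

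The main obstacle is claim~(4), the bound $\dimwl(\Gamma_i)\leq 3$. I would proceed by an individualization-plus-refinement argument: after individualizing a single vertex $v_0 \in H_3(q)$, the $2$-dim WL procedure should already separate the remaining vertices into orbits of the point stabilizer $\aut(\mathcal{A})_{v_0}$. The favourable structure of $H_3(q)$ makes this plausible: the centre $Z$ is recovered as the common class of the divisible partition, the quotient $H_3(q)/Z \cong \mathbb{F}_q^2$ is elementary abelian (where the action of $2$-WL on basic sets is tractable), and the stabilizer $\aut(\mathcal{A})_{v_0}$ should act transitively on the nontrivial cosets of $Z$, allowing the refinement to propagate from the quotient up to the whole group. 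Implementing this rigorously will almost certainly require an explicit intersection-number analysis inside $\mathcal{A}$ and is where I expect the bulk of the technical work to lie.
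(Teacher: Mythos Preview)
The critical flaw is in your treatment of claim~(2). You propose that the design isomorphism $\mathcal{D}(\Gamma_i)\cong\mathcal{D}(\Gamma_j)$ be witnessed by an element of $\iso(\mathcal{A})$ sending $X_i$ to $X_j$. But any such element is a bijection $G\to G$ carrying the basis relation $r(Y_i)$ onto $r(Y_j)$, hence a \emph{digraph} isomorphism $\Gamma_i\to\Gamma_j$. This directly contradicts the pairwise nonisomorphism you have just argued for in~(1): for distinct $i,j\in I$ there is \emph{no} element of $\iso(\mathcal{A})$ moving $X_i$ to $X_j$, so your source of design isomorphisms is empty precisely when it is needed. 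The paper resolves~(2) by a genuinely different device: it exhibits an explicit \emph{pair} of bijections $f,h\colon G\to G$ with $f\neq h$ such that $g\in X_0g_0 \Leftrightarrow f(g)\in X_ih(g_0)$ for all $g,g_0\in G$. This is a design isomorphism (separate bijections on points and on blocks) and not a digraph isomorphism; its construction and verification are a direct coordinate computation in $H_3(q)$ that has nothing to do with $\iso(\mathcal{A})$.

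There is a smaller gap in~(3): having $\WL(\Gamma_i)=\WL(\Gamma_j)=\mathcal{A}$ is not by itself sufficient for WL-equivalence; one still needs an algebraic automorphism $\varphi\in\aut_{\alg}(\mathcal{A})$ with $\varphi(Y_i)=Y_j$. The paper produces these from the automorphisms of the cyclic group $(\mathbb{F}_q^\infty,\psi)\cong C_{q+1}$, which permute the basic sets $Y_k$ compatibly with all structure constants. For~(4) your outline points in the right direction, but note that showing the one-point extension $\mathcal{X}_e$ refines to the orbit partition of $\aut(\mathcal{A})_e$ yields only schurianity; what is needed is \emph{separability} of $\mathcal{X}_e$. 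The paper obtains this by exhibiting valency-$1$ basis relations from the fiber $Y_0$ to every other fiber of $\mathcal{X}_e$, reducing to the restriction $(\mathcal{X}_e)_{Y_0}$, and then observing that $\aut((\mathcal{X}_e)_{Y_0})\cong K\cong C_{q^2-1}$ is regular on $Y_0$, so $(\mathcal{X}_e)_{Y_0}$ is a regular scheme and hence separable.
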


\begin{rem1}
By the estimates for the Euler function (see, e.g.,~\cite[Theorem~15]{RS}), we have
$$\frac{\phi(q+1)}{2\log(q)}\geq \frac{q+1}{2c\log(q)\log(\log(q+1))} \underset{q \to \infty}{\longrightarrow} \infty$$
for some constant $c>0$.
\end{rem1}

\begin{rem2}
The WL-dimension of the digraphs from Theorem~\ref{main} is equal to~$3$ if $q>9$ and equal to~$2$ if $q=3$ (see Proposition~\ref{wldim3}). It would be interesting to determine exact value of the WL-dimension for these graphs in cases $q=5$ and $q=9$. 
\end{rem2}

We finish the introduction with a brief outline of the paper. Section~$2$ contains a necessary background of coherent configurations, $S$-rings (Schur rings), and WL-dimension. In Section~$3$, we study an $S$-ring over a Heisenberg group and its isomorphisms that will be used for the proof of Theorem~\ref{main}. Finally, we prove Theorem~\ref{main} in Section~$3$. We use the standard group-theoretical notations in the text. Namely, if $G$ is a group and $H\leq G$, then the centralizer and normalizer of $H$ in $G$ are denoted by $C_G(H)$ and $N_G(H)$, respectively, whereas the center of $G$ is denoted by $Z(G)$. The identity element of $G$ and the set of all nontrivial elements of $G$ are denoted by~$e$ and $G^\#$, respectively. The symmetric group of the set $\Omega$ is denoted by $\sym(\Omega)$. If $\Delta\subseteq \Omega$ is invariant under $f\in \sym(\Omega)$, then the bijection induced by $f$ on $\Delta$ is denoted by $f^{\Delta}$. The holomorph of a group $G$ is denoted by $\Hol(G)$.

\section{Preliminaries}

Throughout the text, we freely use basic definitions and facts from the theories of coherent configurations and $S$-rings (Schur rings) and recall in this section some of them which will be crucial for further explanation. For a background of coherent configurations and $S$-rings, we refer the readers to~\cite{CP} and~\cite{Ry}, respectively, where the terminology used in the paper and all facts not explained in detail are contained.

\subsection{Coherent configurations}

Let $\mathcal{X}=(\Omega,S)$ be a coherent configuration on a finite set $\Omega$ and $S$ the set of all basis relations of $\mathcal{X}$. If the diagonal of $\Omega\times \Omega$ is a basis relation of $\mathcal{X}$, then $\mathcal{X}$ is an (\emph{association}) \emph{scheme}. A binary relation on $\Omega$ is defined to be a \emph{relation} of $\mathcal{X}$ if it is a union of some basis relations. Given a coherent configuration $\mathcal{X}^\prime$ on $\Omega$, put $\mathcal{X}^\prime\geq \mathcal{X}$ if every basis relation of $\mathcal{X}$ is a relation $\mathcal{X}^\prime$.

If $s\in S$ and $\alpha\in \Omega$, then $n_s$ denotes the valency of $S$ and $\alpha s=\{\beta\in \Omega:~(\alpha,\beta)\in s\}$. A scheme $\mathcal{X}$ such that $n_s=1$ for all $s\in S$ is said to be \emph{regular}. The set of all fibers of $\mathcal{X}$ is denoted by $F(\mathcal{X})$. If $\Delta\in F(\mathcal{X})$, then the pair 
$$\mathcal{X}_{\Delta}=(\Delta,S_{\Delta}),$$
where $S_{\Delta}=\{s\in S:~s\subseteq\Delta\times\Delta\}$, is a coherent configuration on $\Delta$. Given $r,s,t\in S$, the corresponding intersection number of $\mathcal{X}$ is denoted by $c_{rs}^t$. 

Let $\mathcal{X}=(\Omega,S)$ and $\mathcal{X}^{\prime}=(\Omega^{\prime},S^{\prime})$ be coherent configurations. An \emph{algebraic isomorphism} from $\mathcal{X}$ to $\mathcal{X}^{\prime}$ is defined to be a bijection $\varphi:S\rightarrow S^{\prime}$ such that
$$c_{rs}^t=c_{\varphi(r),\varphi(s)}^{\varphi(t)}$$
for all $r,s,t\in S$. Every algebraic isomorphism can be extended in a natural way to a bijection between the relations of $\mathcal{X}$ and $\mathcal{X}^\prime$. The set of all algebraic isomorphisms from $\mathcal{X}$ to itself (algebraic automorphisms) forms the subgroup of $\sym(S)$ denoted by $\aut_{\alg}(\mathcal{X})$.

A (\emph{combinatorial}) \emph{isomorphism} from $\mathcal{X}$ to $\mathcal{X}^{\prime}$ is defined to be a bijection $f:\Omega\rightarrow \Omega^{\prime}$ such that $S^{\prime}=f(S)$, where $f(S)=\{f(s):~s\in S\}$ and $f(s)=\{(f(\alpha),f(\beta)):~(\alpha,~\beta)\in s\}$. The group $\iso(\mathcal{X})$ of all isomorphisms from $\mathcal{X}$ onto itself has a normal subgroup
$$\aut(\mathcal{X})=\{f\in \iso(\mathcal{X}): f(s)=s~\text{for every}~s\in S\}$$
called the \emph{automorphism group} of $\mathcal{X}$. A coherent configuration is said to be \emph{schurian} if $S$ is equal to the set of all orbits of $\aut(\mathcal{X})$ acting on $\Omega^2$ componentwise. A schurian coherent configuration is said to be \emph{$2$-minimal} if $\aut(\mathcal{X})$ does not have a proper subgroup with the same set $S$ of orbits in its componentwise action on $\Omega^2$.

Every isomorphism of coherent configurations induces in a natural way the algebraic isomorphism of them. However, not every algebraic isomorphism is induced by a combinatorial one. A coherent configuration is said to be \emph{separable} if every algebraic isomorphism from it to any coherent configuration is induced by an isomorphism. The set of all algebraic automorphisms of $\mathcal{X}$ induced by isomorphisms forms the subgroup of $\aut_{\alg}(\mathcal{X})$ denoted by $\aut^{\ind}_{\alg}(\mathcal{X})$.

\begin{lemm}\cite[Theorem~2.3.33]{CP}\label{regularsep}
Every regular scheme is separable.
\end{lemm}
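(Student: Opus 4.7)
The plan is to exploit the fact that a regular scheme is combinatorially equivalent to a finite group acting regularly on itself. Concretely, if $\mathcal{X}=(\Omega,S)$ is regular then every basis relation $s\in S$ has valency one, and so $s$ is the graph of a well-defined bijection $\pi_s\in\sym(\Omega)$, where $\pi_s(\alpha)$ is the unique $\beta\in\Omega$ with $(\alpha,\beta)\in s$. Because $\mathcal{X}$ is a scheme, the diagonal is a basis relation, giving the identity; and because all intersection numbers are $0$ or $1$ (as $n_sn_t=\sum_u c_{st}^u n_u$ forces $\sum_u c_{st}^u=1$), the set $G=\{\pi_s:s\in S\}$ is closed under composition and inverses, hence forms a regular subgroup of $\sym(\Omega)$. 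I would record this and the parallel statement: the basis relations of $\mathcal{X}$ are exactly the orbits of the diagonal action of $G$ on $\Omega\times\Omega$.

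Next I would let $\varphi:S\to S'$ be an algebraic isomorphism onto a coherent configuration $\mathcal{X}'=(\Omega',S')$. Algebraic isomorphisms preserve valencies via the identity $n_s=\sum_{t\in S}c_{s,s^{*}}^{t'}n_{t'}$-style combinatorics, so every $\varphi(s)$ also has valency one; in particular $\mathcal{X}'$ is a regular scheme too, and its basis relations arise from a regular group $G'=\{\pi'_{s'}:s'\in S'\}\leq\sym(\Omega')$. The map $\pi_s\mapsto\pi'_{\varphi(s)}$ is then a group isomorphism $G\to G'$: preservation of the intersection numbers $c_{st}^u\in\{0,1\}$ is exactly preservation of the multiplication table of $G$.

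Now I would fix base points $\alpha_0\in\Omega$ and $\alpha_0'\in\Omega'$ and define $f:\Omega\to\Omega'$ by $f(\pi_s(\alpha_0))=\pi'_{\varphi(s)}(\alpha_0')$; this is well defined and bijective because $G$ and $G'$ act regularly. A direct check, using that $(\pi_s(\alpha_0),\pi_t(\alpha_0))$ lies in the unique basis relation $u$ corresponding to $\pi_t\pi_s^{-1}$, shows $f(s)=\varphi(s)$ for every $s\in S$, so $f$ is a combinatorial isomorphism $\mathcal{X}\to\mathcal{X}'$ inducing $\varphi$. Hence $\mathcal{X}$ is separable.

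The two places that need care, rather than any deep argument, are (i) verifying that $G$ really is a group, i.e.\ that composing two valency-one relations produces a single basis relation (this uses regularity together with the scheme axiom that the diagonal is basic, to ensure $\pi_s\pi_t^{-1}$ is again of the form $\pi_u$); and (ii) the bookkeeping that $f$ carries each basis relation to the $\varphi$-image of that basis relation. Neither step is an obstacle in a substantive sense; the conceptual core is simply the translation between regular schemes and regular group actions, after which the algebraic isomorphism is automatically realized combinatorially.
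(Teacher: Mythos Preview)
The paper does not prove this lemma; it simply cites \cite[Theorem~2.3.33]{CP}. Your argument is the standard one underlying that reference and is correct in substance: identify the basis relations with permutations $\pi_s$, observe that the intersection numbers encode a group multiplication, transport this group structure through~$\varphi$, and build the combinatorial isomorphism from a group isomorphism via chosen base points.

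One small correction: your side remark that ``the basis relations of $\mathcal{X}$ are exactly the orbits of the diagonal action of $G$ on $\Omega\times\Omega$'' is false when $G$ is nonabelian. In the thin Cayley scheme of a group $H$ with relations $r_h=\{(k,hk):k\in H\}$, your $\pi_{r_h}$ is \emph{left} translation by $h$, whereas $\aut(\mathcal{X})=H_\r$ consists of \emph{right} translations; the diagonal action of $G=H_\ell$ preserves each $r_h$ only when $H$ is abelian. Fortunately you never use this remark---the construction of $f$ and the verification $f(s)=\varphi(s)$ go through exactly as you describe using only the group law encoded by the $c_{st}^u$---so you should simply delete that sentence rather than try to repair it.
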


\begin{lemm}\cite[Lemma~9.2]{EP}\label{deletefiber}
Let $\mathcal{X}=(\Omega,S)$ be a coherent configuration and $\Delta\in F(\mathcal{X})$ such that for every $\Delta\neq \Delta^\prime\in F(\mathcal{X})$ there is a basis relation $r\subseteq \Delta\times \Delta^\prime$ of valency~$1$. Then 
\begin{enumerate}

\tm{1} $\aut(\mathcal{X}_{\Delta})\cong \aut(\mathcal{X})$;

\tm{2} $\mathcal{X}$ is separable (schurian) whenever $\mathcal{X}_{\Delta}$ so is.
\end{enumerate}

\end{lemm}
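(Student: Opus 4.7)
My plan is to extract, from the hypothesis, canonical bijections $f_{\Delta'}: \Delta \to \Delta'$ induced by the valency-$1$ basis relations $r_{\Delta'} \subseteq \Delta \times \Delta'$ and use them to transfer structure back and forth between $\mathcal{X}$ and $\mathcal{X}_\Delta$. A preliminary counting argument based on $|r_{\Delta'}| = n_{r_{\Delta'}}|\Delta| = n_{r_{\Delta'}^*}|\Delta'|$, together with the hypothesis applied symmetrically, yields that $f_{\Delta'}$ is a bijection of fibers of equal size. The crucial structural fact I would establish first is that, for any two fibers $\Delta_1, \Delta_2$, multiplication by the valency-$1$ basis relations on the two sides,
\[ s \longmapsto r_{\Delta_1}^* \cdot s \cdot r_{\Delta_2}, \]
is a valency-preserving bijection from basis relations of $\mathcal{X}$ contained in $\Delta_1 \times \Delta_2$ to basis relations of $\mathcal{X}_\Delta$ (with the convention that $r_\Delta$ denotes the diagonal relation on $\Delta$). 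Invertibility comes from $r_{\Delta_i} \cdot r_{\Delta_i}^*$ being this diagonal, while the fact that single basis relations map to single basis relations (rather than to strict unions) follows by comparing the numbers of parts in the two partitions of $\Delta \times \Delta$ obtained by pulling back all basis relations in $\Delta_1 \times \Delta_2$.

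For part (1) I would study the restriction map $\rho: g \mapsto g|_\Delta$ from $\aut(\mathcal{X})$ to $\aut(\mathcal{X}_\Delta)$, which is well defined because fibers are automorphism-invariant. Injectivity is immediate: if $g|_\Delta = \id_\Delta$, then for $\alpha' \in \Delta'$ and the unique $\alpha \in \Delta$ with $\alpha' = f_{\Delta'}(\alpha)$, preservation of $r_{\Delta'}$ combined with $g(\alpha) = \alpha$ forces $g(\alpha') = \alpha'$. For surjectivity I would extend $h \in \aut(\mathcal{X}_\Delta)$ by $\tilde{h}|_{\Delta'} := f_{\Delta'} \circ h \circ f_{\Delta'}^{-1}$; the structural fact then implies that $\tilde{h}$ preserves every basis relation of $\mathcal{X}$, because preservation is transferred, via composition with the $r_{\Delta_i}$, to preservation by $h$ of the corresponding basis relation of $\mathcal{X}_\Delta$.

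For part (2), the schurian claim propagates as follows: a basis relation $s \subseteq \Delta_1 \times \Delta_2$ of $\mathcal{X}$ corresponds by the structural fact to a single basis relation $t$ of $\mathcal{X}_\Delta$, and any two pairs in $s$ pull back via $(f_{\Delta_1}^{-1}, f_{\Delta_2}^{-1})$ to pairs in $t$, which are $\aut(\mathcal{X}_\Delta)$-conjugate by the schurian hypothesis and hence $\aut(\mathcal{X})$-conjugate by part (1). For separability, given an algebraic isomorphism $\varphi: \mathcal{X} \to \mathcal{X}^\prime$, I observe that $\varphi$ maps fibers to fibers and valency-$1$ basis relations to valency-$1$ basis relations, so $\mathcal{X}^\prime$ satisfies the hypothesis at the fiber $\Delta^\varphi$ corresponding to $\Delta$; separability of $\mathcal{X}_\Delta$ produces a combinatorial isomorphism $f: \Delta \to \Delta^\varphi$ inducing $\varphi|_\Delta$, and I extend $f$ to all of $\Omega$ by $\tilde{f}|_{\Delta'} := f^\prime_{\varphi(\Delta')} \circ f \circ f_{\Delta'}^{-1}$, where $f^\prime_{\bullet}$ denotes the analogous canonical bijections in $\mathcal{X}^\prime$, and verify that $\tilde{f}$ induces $\varphi$.

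The main obstacle I anticipate is the structural bijection between basis relations of $\mathcal{X}$ in $\Delta_1 \times \Delta_2$ and basis relations of $\mathcal{X}_\Delta$: one must rule out that the pull-back of a single basis relation is a strict union of basis relations, which requires a careful comparison of partition cardinalities on both sides rather than a purely formal manipulation. Once this bookkeeping is in place, parts (1) and (2) both reduce to routine verifications using only the defining intersection-number axioms of coherent configurations and the fact that $r_{\Delta_i}$ is a bijection.
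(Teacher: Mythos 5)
The paper offers no proof of this lemma---it is imported verbatim from \cite[Lemma~9.2]{EP}---so your argument can only be judged on its own terms, and it has a genuine gap at its foundation. Your preliminary claim that each $f_{\Delta'}$ is a bijection between fibers of equal size is false: the hypothesis provides a basis relation $r\subseteq\Delta\times\Delta'$ with $n_r=1$, i.e.\ the graph of a surjection $\Delta\to\Delta'$ which is $|\Delta|/|\Delta'|$-to-one, and nothing can be ``applied symmetrically'' because the hypothesis says nothing about valency-one relations inside $\Delta'\times\Delta$. This is not a hypothetical worry: in the only place the lemma is used (Proposition~\ref{wldim3}), $\Delta=Y_0$ has size $q^2-1$ while the other fibers $Z^\#$ and $\{e\}$ have sizes $q-1$ and $1$, so the maps $f_{\Delta'}$ are genuinely non-injective and $f_{\Delta'}^{-1}$ does not exist. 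The ``crucial structural fact'' collapses with it: taking $\Delta_1=\Delta_2=\{e\}$, there is exactly one basis relation $s$ in $\Delta_1\times\Delta_2$, and its pull-back $r_{\Delta_1}\cdot s\cdot r_{\Delta_2}^{*}$ (note the adjoints sit on the wrong sides in your formula as written) is all of $\Delta\times\Delta$, a union of \emph{all} basis relations of $\mathcal{X}_\Delta$. So the map $s\mapsto r_{\Delta_1}\, s\, r_{\Delta_2}^{*}$ is not a bijection onto basis relations, and no comparison of partition cardinalities can rescue it, because the two cardinalities really do differ.

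Much of your outline survives once bijectivity is dropped, since the only identities genuinely needed are $r_{\Delta'}^{*}\cdot r_{\Delta'}=1_{\Delta'}$ (which does follow from $n_{r_{\Delta'}}=1$) and the fact that $r_{\Delta_1}\, s\, r_{\Delta_2}^{*}$ is a \emph{relation} (not necessarily a basis relation) of $\mathcal{X}_\Delta$: injectivity of the restriction map uses only existence, not uniqueness, of a preimage; the extension must be defined by $\tilde h(f_{\Delta'}(\alpha))=f_{\Delta'}(h(\alpha))$, with well-definedness coming from $h$ preserving the relation $r_{\Delta'}r_{\Delta'}^{*}$; and your separability argument goes through in the same repaired form. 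But the schurianity step breaks irreparably as written: when $P(s)=r_{\Delta_1}\, s\, r_{\Delta_2}^{*}$ is a union of several basis relations of $\mathcal{X}_\Delta$, two pairs of $s$ need not lift to pairs lying in a \emph{common} basis relation, so schurianity of $\mathcal{X}_\Delta$ yields no automorphism carrying one to the other. The correct substitute, which you would need to prove, is that for \emph{every} basis relation $t$ of $\mathcal{X}_\Delta$ contained in $P(s)$ one has $r_{\Delta_1}^{*}\, t\, r_{\Delta_2}=s$ (this product is a nonempty relation of $\mathcal{X}$ contained in the basis relation $s$); then both pairs of $s$ lift into any one such $t$ and the argument resumes.
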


Given $\alpha \in \Omega$, the one-point extension of $\mathcal{X}$ with respect to $\alpha$, i.e. the smallest coherent configuration $\mathcal{X}^\prime$ on $\Omega$ such that $\mathcal{X}^\prime \geq \mathcal{X}$ and $\{\alpha\}\in F(\mathcal{X}^\prime)$, is denoted by $\mathcal{X}_{\alpha}$. By~\cite[Proposition~3.3.3(1)]{CP},
\begin{equation}\label{autpoint}
\aut(\mathcal{X}_{\alpha})=\aut(\mathcal{X})_{\alpha}.
\end{equation}

The first statement of the lemma below is~\cite[Lemma~3.3.5(2)]{CP} whereas the second one is~\cite[Theorem~3.3.7(1)]{CP}. 

\begin{lemm}\label{onepoint}
Let $\mathcal{X}=(\Omega,S)$ be a coherent configuration and $\alpha\in \Omega$. Then the following statements hold.
\begin{enumerate}

\tm{1} For all $r,s,t\in S$, the binary relation $r\cap (\alpha s\times \alpha t)$ is a relation of $\mathcal{X}_{\alpha}$. 

\tm{2} If $\mathcal{X}$ is schurian, then $F(\mathcal{X}_{\alpha})=\{\alpha s:~s\in S\}$. 

\end{enumerate}
\end{lemm}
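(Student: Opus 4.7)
The plan is to derive both statements from the universal property of the one-point extension $\mathcal{X}_\alpha$ (the coarsest coherent refinement of $\mathcal{X}$ in which $\{\alpha\}$ is a fiber), together with the closure of any coherent configuration under intersection, transposition, and relational composition of its relations.

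For part~(1), I would begin by noting that since $\{\alpha\}\in F(\mathcal{X}_\alpha)$, the singleton $1_\alpha:=\{(\alpha,\alpha)\}$ is a basis relation of $\mathcal{X}_\alpha$, while $s$ and $t$ are relations of $\mathcal{X}_\alpha$ because $\mathcal{X}_\alpha\geq\mathcal{X}$. A direct computation of relational composites yields
\[
1_\alpha\cdot s=\{\alpha\}\times\alpha s,\qquad (1_\alpha\cdot s)^{*}\cdot(1_\alpha\cdot t)=\alpha s\times\alpha t,
\]
so $\alpha s\times\alpha t$ is a relation of $\mathcal{X}_\alpha$, and intersecting with the relation $r$ of $\mathcal{X}_\alpha$ finishes the argument.

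For part~(2), set $G=\aut(\mathcal{X})$ and let $\mathcal{Y}$ be the schurian coherent configuration whose basis relations are the orbits of $G_\alpha$ on $\Omega^2$. Since $\mathcal{X}$ is schurian, every $G_\alpha$-orbit on $\Omega^2$ lies inside a $G$-orbit, hence inside a basis relation of $\mathcal{X}$, so $\mathcal{Y}\geq\mathcal{X}$; and $\{\alpha\}$ is itself a $G_\alpha$-orbit, so $\{\alpha\}\in F(\mathcal{Y})$. The minimality in the definition of $\mathcal{X}_\alpha$ then forces $\mathcal{Y}\geq\mathcal{X}_\alpha$, which means that every fiber of $\mathcal{X}_\alpha$ is a union of $G_\alpha$-orbits on $\Omega$; and these orbits are precisely the sets $\alpha s$, $s\in S$, by transitivity of $G$ on each basis relation. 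Conversely, the intermediate computation in the proof of part~(1) shows that $\{\alpha\}\times\alpha s=1_\alpha\cdot s$ is itself a relation of $\mathcal{X}_\alpha$, which forces $\alpha s$ to be a union of fibers of $\mathcal{X}_\alpha$. Combining the two inclusions gives $F(\mathcal{X}_\alpha)=\{\alpha s:s\in S\}$.

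The only delicate point is bookkeeping the direction of the refinement order (``smallest'' in the definition of $\mathcal{X}_\alpha$ means the \emph{coarsest} refinement, while passing from $G$ to a subgroup $G_\alpha$ yields a \emph{finer} orbit partition) and carefully verifying that $(1_\alpha\cdot s)^{*}\cdot(1_\alpha\cdot t)=\alpha s\times\alpha t$; both are routine once the notation is fixed, and no further input is needed beyond the definition of the one-point extension and equation~(\ref{autpoint}).
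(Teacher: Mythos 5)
Your proof is correct, but note that the paper itself gives no argument for this lemma: both parts are simply imported by citation from the monograph of Chen and Ponomarenko (Lemma~3.3.5(2) and Theorem~3.3.7(1) of \cite{CP}), so there is no in-paper proof to compare against. Your self-contained derivation is sound and follows the standard line: for part~(1) the identities $1_\alpha\cdot s=\{\alpha\}\times\alpha s$ and $(1_\alpha\cdot s)^{*}\cdot(1_\alpha\cdot t)=\alpha s\times\alpha t$ check out, and the closure of the relation set of a coherent configuration under composition, transposition and intersection is exactly the right tool (composition being a relation because the support of $A_rA_s=\sum_t c_{rs}^tA_t$ is a union of basis relations); for part~(2) the two refinement inclusions via the $2$-orbit configuration of $\aut(\mathcal{X})_\alpha$ on one side and the relation $\{\alpha\}\times\alpha s$ on the other are both valid, and two partitions of $\Omega$ each refining the other must coincide. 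Two small points you leave implicit but should record: the step ``$\{\alpha\}\times\alpha s$ is a relation of $\mathcal{X}_\alpha$, hence $\alpha s$ is a union of fibers'' uses that every basis relation projects \emph{onto} its source and target fibers (constant positive valencies), and when $\mathcal{X}$ is not homogeneous the equality $F(\mathcal{X}_\alpha)=\{\alpha s:\ s\in S\}$ must be read as discarding the empty sets $\alpha s$ for those $s$ whose source fiber does not contain $\alpha$ --- a defect of the statement as quoted rather than of your argument, and immaterial in the paper's application where $\mathcal{X}$ is a Cayley scheme.
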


\subsection{$S$-rings}

Let $\mathcal{A}$ be an $S$-ring over a group $G$. The set of all basic sets of $\mathcal{A}$ is denoted by $\mathcal{S}=\mathcal{S}(\mathcal{A})$. Given $X,Y,Z\in \mathcal{S}(\mathcal{A})$, the corresponding structure constant of $\mathcal{A}$ is denoted by $c_{XY}^Z$. Due to~\cite[Eq.~2.1.14]{CP},
\begin{equation}\label{triangle}
|Z|c^{Z^{(-1)}}_{XY}=|X|c^{X^{(-1)}}_{YZ}=|Y|c^{Y^{(-1)}}_{ZX}
\end{equation}
for all $X,Y,Z\in \mathcal{S}(\mathcal{A})$. If $X\subseteq G$ and $\underline{X}\in \mathcal{A}$, then $X$ is defined to be an \emph{$\mathcal{A}$-set}. A subgroup of $G$ which is an $\mathcal{A}$-set is called an \emph{$\mathcal{A}$-subgroup}.

\begin{lemm}\cite[Proposition~22.1]{Wi}\label{sw}
Let $\mathcal{A}$ be an $S$-ring over $G$, $\xi=\sum \limits_{g\in G} c_g g\in \mathcal{A}$, where $c_g\in \mathbb{Z}$, and $c\in \mathbb{Z}$. Then $\{g\in G:~c_g=c\}$ is an $\mathcal{A}$-set.
\end{lemm}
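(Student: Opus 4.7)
The plan is to exploit the very definition of an $S$-ring, namely that the simple quantities $\underline{X}$ indexed by the basic sets $X\in\mathcal{S}(\mathcal{A})$ form a $\mathbb{Z}$-basis of $\mathcal{A}$, and that the basic sets partition $G$. First I would fix the expansion of the given element in this basis, writing
\[
\xi=\sum_{X\in\mathcal{S}(\mathcal{A})} a_X\,\underline{X},\qquad a_X\in\mathbb{Z},
\]
which is unique because $\{\underline{X}:X\in\mathcal{S}(\mathcal{A})\}$ is linearly independent over $\mathbb{Z}$.

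Next I would compare this expansion with the original expression $\xi=\sum_{g\in G}c_g g$. Since the basic sets form a partition of $G$, each $g\in G$ lies in exactly one basic set $X(g)$, and the coefficient of $g$ in $\sum_X a_X\underline{X}$ is precisely $a_{X(g)}$. By uniqueness of the expansion in the group algebra $\mathbb{Z}G$, this forces
\[
c_g=a_{X(g)}\quad\text{for every }g\in G,
\]
so in particular the function $g\mapsto c_g$ is constant on each basic set.

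Consequently, the level set
\[
L_c:=\{g\in G:c_g=c\}=\bigcup_{\substack{X\in\mathcal{S}(\mathcal{A})\\ a_X=c}} X
\]
is a disjoint union of basic sets. Therefore $\underline{L_c}=\sum_{X:\,a_X=c}\underline{X}\in\mathcal{A}$, which is exactly the statement that $L_c$ is an $\mathcal{A}$-set. There is no real obstacle here: the entire content of the lemma is the observation that an element of the $S$-ring has coefficients constant on basic sets, so that fixing a value of the coefficient carves out a union of basic sets. The only thing to be careful about is to invoke the partition property of $\mathcal{S}(\mathcal{A})$ and the freeness of the basis $\{\underline{X}\}$, both of which are built into the definition of an $S$-ring.
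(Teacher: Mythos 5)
Your proof is correct and is exactly the standard argument (the Schur--Wielandt principle) behind the result cited from Wielandt: the paper itself gives no proof, referring to \cite[Proposition~22.1]{Wi}, and your observation that the coefficients of any element of $\mathcal{A}$ are constant on basic sets, so that each level set is a union of basic sets, is precisely that argument.
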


If $\mathcal{A}$ is an $S$-ring over $G$, then $\mathcal{X}=\mathcal{X}(\mathcal{A})=(G,S)$, where $S=S(\mathcal{A})=\{r(X):~X\in \mathcal{S}(\mathcal{A})\}$ and $r(X)=\{(g,xg):~x\in X,~g\in G\}$, is a Cayley scheme over $G$, i.e. a scheme on the set $G$ such that $\aut(\mathcal{X})\geq G_\r$ (see~\cite[Section~2.4]{CP}). Due to~\cite[Theorem~2.4.16]{CP}, the mapping 
$$\mathcal{A}\mapsto \mathcal{X}(\mathcal{A})$$ 
is a partial order isomorphism between the $S$-rings and Cayley schemes over $G$. One can see that $\mathcal{A}_1\leq \mathcal{A}_2$ if and only if $\mathcal{X}(\mathcal{A}_1)\leq \mathcal{X}(\mathcal{A}_2)$.

Put 
$$\iso(\mathcal{A})=\iso(\mathcal{X}(\mathcal{A}))~\text{and}~\aut(\mathcal{A})=\aut(\mathcal{X}(\mathcal{A})).$$
Clearly, $\aut(\mathcal{A})\geq G_\r$. By the definitions, if $f\in \aut(\mathcal{A})$, $X\in \mathcal{S}(\mathcal{A})$, and $g\in G$, then
\begin{equation}\label{autring}
f(Xg)=Xf(g).
\end{equation}
The $S$-ring $\mathcal{A}$ is said to be \emph{normal} if $G_\r$ is normal in $\aut(\mathcal{A})$ or, equivalently, $\aut(\mathcal{A})$ is a subgroup of $\Hol(G)=G_\r\rtimes\aut(G)$. If $H$ is an $\mathcal{A}$-subgroup of $G$, then the set of all right and the set of all left $H$-cosets form imprimitivity systems of $\aut(\mathcal{A})$. 

The $S$-ring $\mathcal{A}$ is said to be \emph{separable} (\emph{schurian}, \emph{2-minimal}, respectively) if the Cayley scheme $\mathcal{X}(\mathcal{A})$ so is. If $\mathcal{A}$ is schurian, then $\mathcal{S}(\mathcal{A})$ is equal to the set of all orbits of the stabilizer $\aut(\mathcal{A})_e$ on $G$ and
\begin{equation}\label{schurnorm}
\iso(\mathcal{A})=N_{\sym(G)}(\aut(\mathcal{A})).
\end{equation}

For every $K \leq \aut(G)$, the partition of $G$ into the orbits of $K$ defines the $S$-ring $\mathcal{A}$ over~$G$. In this case, $\mathcal{A}$ is called \emph{cyclotomic} and denoted by $\cyc(K,G)$. Clearly, $K\leq \aut(\mathcal{A})_e$. If $\mathcal{A}$ is cyclotomic, then  $\mathcal{A}$ is schurian. 

Let $\mathcal{A}_1$ and $\mathcal{A}_2$ be $S$-rings, $\mathcal{X}_1=\mathcal{X}(\mathcal{A}_1)$, and $\mathcal{X}_2=\mathcal{X}(\mathcal{A}_2)$. Every algebraic isomorphism from $\mathcal{X}_1$ to $\mathcal{X}_2$ induces a bijection $\varphi$ from $\mathcal{S}(\mathcal{A}_1)$ to $\mathcal{S}(\mathcal{A}_2)$ such that 
$$c_{\varphi(X)\varphi(Y)}^{\varphi(Z)}=c_{XY}^Z$$ 
for all $X,Y,Z\in \mathcal{S}(\mathcal{A})$. For the convenience, further we will refer to the above bijection as an \emph{algebraic isomorphism} from $\mathcal{A}_1$ to $\mathcal{A}_2$. Every such algebraic isomorphism can be extended in a natural way to a bijection between $\mathcal{A}_1$- and $\mathcal{A}_2$-sets. The group of all algebraic isomorphisms from $\mathcal{A}$ to itself (algebraic automorphisms) and its subgroup consisting of all algebraic automorphisms inducing by the elements of $\iso(\mathcal{A})$ are denoted by $\aut_{\alg}(\mathcal{A})$ and $\aut^{\ind}_{\alg}(\mathcal{A})$, respectively. The groups $\aut_{\alg}(\mathcal{A})$ and $\aut_{\alg}(\mathcal{X}(\mathcal{A}))$ ($\aut^{\ind}_{\alg}(\mathcal{A})$ and $\aut^{\ind}_{\alg}(\mathcal{X}(\mathcal{A}))$, respectively) are equivalent as permutation groups.

It is easy to see that $f\in \iso(\mathcal{A})$ induces the trivial algebraic automorphism of $\mathcal{A}$ which fixes every basic set if and only if $f\in \aut(\mathcal{A})$ and hence $f_1,f_2\in \iso(\mathcal{A})$ induce the same algebraic isomorphism if and only if $f_2f_1^{-1}\in \aut(\mathcal{A})$. Therefore
\begin{equation}\label{algind}
|\aut^{\ind}_{\alg}(\mathcal{A})|=\frac{|\iso(\mathcal{A})|}{|\aut(\mathcal{A})|}.
\end{equation}

\subsection{WL-closure and WL-dimension}

The material of this subsection is taken from~\cite{BPR} (see also~\cite[Section~2.6]{CP}). The \emph{WL-closure} $\WL(\Gamma)$ of a digraph $\Gamma=(\Omega,D)$ is defined to be the smallest coherent configuration $\mathcal{X}=(\Omega,S)$ such that $D$ is a relation of $\mathcal{X}$. The tensor of the intersection numbers of $\WL(\Gamma)$ is an output of the Weisfeiler-Leman algorithm whose input graph is $\Gamma$. One can check that $\aut(\WL(\Gamma))=\aut(\Gamma)$. So if $\Gamma=\cay(G,X)$ is a Cayley digraph, then $\aut(\WL(\Gamma))\geq G_\r$ and hence $\WL(\Gamma)$ is a Cayley scheme over $G$. For the convenience, further we will refer to the $S$-ring $\mathcal{A}$ such that $\mathcal{X}(\mathcal{A})=\WL(\Gamma)$ as the WL-closure of the Cayley digraph $\Gamma$. Due to the definitions, $\mathcal{A}$ is the smallest (with respect to inclusion) $S$-ring $\mathcal{A}$ over $G$ such that $X$ is an $\mathcal{A}$-set. 

\begin{lemm}\cite[Theorem~2.6.4]{CP}\label{wliso}
Let $\Gamma_1$ and $\Gamma_2$ be digraphs and $f$ an isomorphism from $\Gamma_1$ to $\Gamma_2$. Then $f$ is an isomorphism from $\WL(\Gamma_1)$ to $\WL(\Gamma_2)$.
\end{lemm}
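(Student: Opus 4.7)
The plan is to exploit the characterization of the WL-closure as the smallest coherent configuration containing the arc relation, together with the obvious fact that bijections of vertex sets transport coherent configurations between them. Write $\Gamma_i=(\Omega_i,D_i)$ for $i=1,2$, so that $f(D_1)=D_2$.

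First I would set up the transport of coherent configurations. For any coherent configuration $\mathcal{X}=(\Omega_2,S)$ on $\Omega_2$ and any bijection $f:\Omega_1\to\Omega_2$, the pullback $f^{-1}(\mathcal{X})=(\Omega_1,f^{-1}(S))$ with basis relations $f^{-1}(s)=\{(f^{-1}(\alpha),f^{-1}(\beta)):(\alpha,\beta)\in s\}$ is again a coherent configuration on $\Omega_1$ with identical intersection numbers; the axioms are expressed purely in terms of the basis relations and so are preserved by any bijection of the underlying sets. Moreover a union of basis relations is transported to a union, so $f^{-1}$ induces an order-preserving bijection between the lattices of coherent configurations on $\Omega_2$ and on $\Omega_1$.

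Next I would apply minimality. The relation $D_2$ is a relation of $\WL(\Gamma_2)$ by definition, hence $D_1=f^{-1}(D_2)$ is a relation of $f^{-1}(\WL(\Gamma_2))$. Since $\WL(\Gamma_1)$ is the smallest coherent configuration on $\Omega_1$ having $D_1$ as a relation, we get
\[
\WL(\Gamma_1)\ \leq\ f^{-1}(\WL(\Gamma_2)).
\]
Equivalently, every basis relation of $\WL(\Gamma_1)$ is carried by $f$ into a relation of $\WL(\Gamma_2)$. The symmetric argument, applied to the bijection $f^{-1}:\Omega_2\to\Omega_1$ and the digraph isomorphism $f^{-1}:\Gamma_2\to\Gamma_1$, yields $\WL(\Gamma_2)\leq f(\WL(\Gamma_1))$. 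Combining the two inclusions gives $f(\WL(\Gamma_1))=\WL(\Gamma_2)$, which is precisely the statement that $f$ is a (combinatorial) isomorphism from $\WL(\Gamma_1)$ onto $\WL(\Gamma_2)$.

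There is essentially no obstacle here; the statement is a formal consequence of (a) the fact that coherent configurations are preserved under bijections of the point set, and (b) the minimality defining the WL-closure. The only minor care needed is to verify that the pullback of a coherent configuration is a coherent configuration, which is immediate from the fact that the defining axioms (closure under transpose, the diagonal being a union of basis relations, and the existence of integer intersection numbers $c^t_{rs}$) only reference the combinatorics of the relations themselves.
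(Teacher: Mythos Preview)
Your argument is correct and is essentially the standard one: transport coherent configurations along the bijection $f$ and use the minimality characterization of the WL-closure twice to obtain $f(\WL(\Gamma_1))=\WL(\Gamma_2)$. Note that the paper does not give its own proof of this lemma; it is quoted as \cite[Theorem~2.6.4]{CP}, and the proof there proceeds along the same lines as yours.
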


The next lemma can be deduced straightforwardly by the induction on the number of iterations of the Weisfeiler-Leman algorithm.

\begin{lemm}\label{wlprop}
Let $\Gamma_1=(\Omega_1,D_1)$ and $\Gamma_2=(\Omega_2,D_2)$ be digraphs and $\varphi$ an algebraic isomorphism from $\WL(\Gamma_1)$ to $\WL(\Gamma_2)$ such that $\varphi(D_1)=D_2$. Then $\Gamma_1$ and $\Gamma_2$ are $\WL$-equivalent.
\end{lemm}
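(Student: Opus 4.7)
The plan is to argue by induction on the iteration count $i\ge 0$ of the $2$-dimensional Weisfeiler-Leman refinement. Recall that at step $i$, the algorithm applied to a digraph $\Gamma=(\Omega,D)$ produces a coloring $C_i^{\Gamma}$ of $\Omega\times\Omega$ whose color classes are relations of $\WL(\Gamma)$, and it stabilizes precisely when every color class is a single basis relation of $\WL(\Gamma)$. I will establish the invariant that, for every $i$, there exists a bijection $\Phi_i$ between the step-$i$ color classes of $\Gamma_1$ and those of $\Gamma_2$ satisfying $\Phi_i(R)=\varphi(R)$, where $\varphi$ is extended from basis relations to arbitrary relations of $\WL(\Gamma_1)$ in the natural way. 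WL-equivalence of $\Gamma_1$ and $\Gamma_2$ amounts exactly to the existence of such a color-class bijection at stabilization, so this invariant yields the lemma.

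For the base case $i=0$, the initial WL coloring partitions $\Omega\times\Omega$ by Boolean combinations of the diagonal, the arc relation $D$, and its converse $D^{*}$. Since an algebraic isomorphism between coherent configurations sends diagonal to diagonal and commutes with transposition, the hypothesis $\varphi(D_1)=D_2$ immediately gives $\varphi(D_1^{*})=D_2^{*}$, so each initial color class of $\Gamma_1$ is mapped by $\varphi$ onto the corresponding class of $\Gamma_2$. For the inductive step I will use the refinement rule
$$
C_{i+1}(\alpha,\beta)\;=\;\bigl(C_i(\alpha,\beta),\;\{\!\{(C_i(\alpha,\gamma),C_i(\gamma,\beta)):\gamma\in\Omega\}\!\}\bigr).
$$
Given a step-$i$ class $R$ of $\Gamma_1$ and a basis relation $s\subseteq R$ of $\WL(\Gamma_1)$, for $(\alpha,\beta)\in s$ and any ordered pair of step-$i$ classes $R',R''$ the multiplicity of $(R',R'')$ in this multiset equals
$$
\sum_{r'\subseteq R',\,r''\subseteq R''} c_{r'r''}^{s},
$$
where $r',r''$ range over basis relations of $\WL(\Gamma_1)$. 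By the induction hypothesis $\varphi(R'),\varphi(R'')$ are the step-$i$ classes of $\Gamma_2$ matched to $R',R''$ under $\Phi_i$; since $\varphi$ preserves structure constants, $c_{r'r''}^{s}=c_{\varphi(r')\varphi(r'')}^{\varphi(s)}$, and summing yields the same multiplicity at $\varphi(s)\subseteq\varphi(R)$ with respect to $(\varphi(R'),\varphi(R''))$. Consequently, two basis relations $s,t\subseteq R$ receive the same step-$(i+1)$ color iff $\varphi(s),\varphi(t)\subseteq\varphi(R)$ do, and $\Phi_{i+1}$ is defined by sending the resulting union of such $s$'s to the union of the corresponding $\varphi(s)$'s.

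At stabilization every color class is a single basis relation, so the final bijection coincides with $\varphi$ on basis relations and the intersection-number tensors WL outputs for $\Gamma_1$ and $\Gamma_2$ become identical under this labeling, yielding the desired WL-equivalence. The hard part, and really the only point where care is needed, is the bookkeeping: verifying that each intermediate color class is genuinely a union of basis relations of $\WL(\Gamma_j)$ and that $\Phi_i$ is consistent with the extension of $\varphi$ from basis relations to arbitrary relations. Both facts are built into the inductive construction, so beyond keeping notation straight no conceptual obstacle arises, consistent with the paper's remark that the lemma follows straightforwardly by induction on the number of iterations.
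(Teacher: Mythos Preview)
Your proposal is correct and follows exactly the approach indicated in the paper, which does not give a detailed proof but merely states that the lemma ``can be deduced straightforwardly by the induction on the number of iterations of the Weisfeiler--Leman algorithm.'' Your inductive invariant $\Phi_i(R)=\varphi(R)$, the verification of the base case using that algebraic isomorphisms preserve the diagonal and commute with transposition, and the inductive step via $\sum_{r'\subseteq R',\,r''\subseteq R''} c_{r'r''}^{s}=\sum_{r'\subseteq R',\,r''\subseteq R''} c_{\varphi(r')\varphi(r'')}^{\varphi(s)}$ are precisely the bookkeeping the paper leaves implicit.
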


The lemma below is a corollary of~\cite[Theorem~2.5]{FKV} and can be found, e.g., in~\cite[Lemma~3.1]{BPR}. 

\begin{lemm}\label{wldim}
Let $\Gamma$ be a digraph with vertex set $\Omega$ and $\mathcal{X}=\WL(\Gamma)$. Then the following statements hold.

$(1)$ $\dimwl(\Gamma)\leq 2$ if and only if $\mathcal{X}$ is separable.

$(2)$ If $\mathcal{X}_{\alpha}$ is separable for some $\alpha\in \Omega$, then $\dimwl(\Gamma)\leq 3$.
\end{lemm}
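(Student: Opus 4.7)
The plan is to reduce both parts to the same mechanism: the $k$-dimensional WL algorithm on $\Gamma=(\Omega,D)$ records enough information to reconstruct $\WL(\Gamma)$ (for $k=2$) or its one-point extensions (for $k=3$) up to algebraic isomorphism, so that two $k$-WL-equivalent digraphs are linked by an algebraic isomorphism of the corresponding coherent configurations sending one edge relation to the other. Separability is then used to lift such an algebraic isomorphism to a combinatorial one $f$, which, because it carries $D$ to $D'$, is automatically a digraph isomorphism.

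For (1), the starting point is the familiar fact that $\Gamma=(\Omega,D)$ and $\Gamma'=(\Omega',D')$ produce the same $2$-WL output if and only if there is an algebraic isomorphism $\varphi\colon\WL(\Gamma)\to\WL(\Gamma')$ with $\varphi(D)=D'$. If $\mathcal{X}=\WL(\Gamma)$ is separable, any such $\varphi$ is induced by a combinatorial isomorphism $f\colon\Omega\to\Omega'$; since the induced bijection on relations coincides with $\varphi$, we get $f(D)=D'$ and $f$ is a digraph isomorphism, so $\dimwl(\Gamma)\le 2$. For the converse, fix an algebraic isomorphism $\varphi\colon\mathcal{X}\to\mathcal{X}'$ that is not combinatorially induced, set $D'=\varphi(D)$ and $\Gamma'=(\Omega',D')$; then $\Gamma$ and $\Gamma'$ are $2$-WL-equivalent, but Lemma~\ref{wliso} forces any hypothetical digraph isomorphism $\Gamma\to\Gamma'$ to realise $\varphi$ combinatorially, contradicting the choice of $\varphi$.

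For (2), the analogous characterization, which is exactly the content of Theorem~2.5 of~\cite{FKV}, says that $\Gamma$ and $\Gamma'$ share the $3$-WL output if and only if for every $\alpha\in\Omega$ there exist $\alpha'\in\Omega'$ and an algebraic isomorphism $\varphi_\alpha\colon\WL(\Gamma)_\alpha\to\WL(\Gamma')_{\alpha'}$ satisfying $\varphi_\alpha(D)=D'$; intuitively, fixing one vertex in the $3$-WL colouring reduces the remaining problem to $2$-WL equivalence of the one-point extensions. Assuming $\mathcal{X}_\alpha$ is separable for some particular $\alpha\in\Omega$, I apply this characterization at the chosen $\alpha$: for any $\Gamma'$ with the same $3$-WL output as $\Gamma$, pick the associated $\alpha'$ and $\varphi_\alpha$, and use separability to obtain a combinatorial isomorphism $f\colon\Omega\to\Omega'$ inducing $\varphi_\alpha$ with $f(\alpha)=\alpha'$. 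Exactly as in (1), $f(D)=\varphi_\alpha(D)=D'$, so $f$ is an isomorphism $\Gamma\to\Gamma'$, and therefore $\dimwl(\Gamma)\le 3$.

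The main obstacle is not any of the individual steps but the two WL characterizations themselves, which translate ``$k$-WL-equivalent'' into ``algebraically isomorphic $(k-2)$-point extensions of the WL-closures preserving the edge relation''. I would invoke Theorem~2.5 of~\cite{FKV} for this translation; the remaining work is standard bookkeeping, modulo a small verification in the converse of (1) that the coherent configuration $\WL(\Gamma')$ built from the pulled-back relation $D'$ is compatible with $\mathcal{X}'$ so that Lemma~\ref{wliso} applies cleanly.
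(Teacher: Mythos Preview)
The paper does not supply its own proof of this lemma; it simply records it as a consequence of \cite[Theorem~2.5]{FKV} and points to \cite[Lemma~3.1]{BPR} for the statement in this form. Your sketch is precisely the intended derivation from that theorem, so it matches what the paper invokes rather than offering an alternative route.

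One small point in your converse to~(1) deserves to be made explicit. The digraph isomorphism $f\colon\Gamma\to\Gamma'$ you hypothesize induces, via Lemma~\ref{wliso}, an algebraic isomorphism $\psi\colon\mathcal{X}\to\mathcal{X}'$ with $\psi(D)=D'=\varphi(D)$; to reach the contradiction you need $\psi=\varphi$, not merely that \emph{some} algebraic isomorphism is induced. This does hold, because $D$ generates $\mathcal{X}=\WL(\Gamma)$ under the Weisfeiler--Leman refinement and algebraic isomorphisms commute with that refinement, so two algebraic isomorphisms agreeing on $D$ agree everywhere. This is the same mechanism that gives the compatibility $\WL(\Gamma')=\mathcal{X}'$ you already flag, so once that verification is in hand the argument is complete.
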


\section{An $S$-ring over a Heisenberg group}

In this section, we recall a construction of a cyclotomic $S$-ring from~\cite{MR}, where all the details can be found. Let $q$ be an odd prime power and $\mathbb{F}_q$ a finite field of order~$q$. Further till the end of the paper, 
$$G=H_3(q)=\left\{ 
\left(\begin{smallmatrix}
1 & x & z\\
0 & 1 & y\\
0 & 0 & 1
\end{smallmatrix}\right):~x,y,z\in \mathbb{F}_q \right\}$$
is a Heisenberg group of dimension~$3$ over $\mathbb{F}_q$. Put
$$Z=Z(G)=\left\{ 
\left(\begin{smallmatrix}
1 & 0 & z\\
0 & 1 & 0\\
0 & 0 & 1
\end{smallmatrix}\right):~z\in \mathbb{F}_q \right\}.$$
Clearly, $|G|=q^3$ and $|Z|=q$.

Let $\varepsilon\in \mathbb{F}_q$ be a nonsquare and 
$$\mathcal{M}=\mathcal{M}(\varepsilon)=\left\{ 
\left(\begin{smallmatrix}
\alpha & \beta \\
\varepsilon \beta & \alpha \\
\end{smallmatrix}\right):~\alpha,\beta\in \mathbb{F}_q,~(\alpha,\beta)\neq (0,0) \right\}.$$ 

Due to~\cite[Chapter~2.5]{LN}, the group $\mathcal{M}$ (with the standard matrix multiplication) is a subgroup of $\GL_2(q)$ isomorphic to the multiplicative group $\mathbb{F}_{q^2}^*$. In particular, $|\mathcal{M}|=q^2-1$ and $\mathcal{M}$ is cyclic. Given $M=\left(\begin{smallmatrix}
\alpha & \beta \\
\varepsilon \beta & \alpha \\
\end{smallmatrix}\right)\in \mathcal{M}$, let us define $\rho=\rho(M):G\rightarrow G$ as follows:
$$\rho\left(\left(\begin{smallmatrix}
1 & x & z\\
0 & 1 & y\\
0 & 0 & 1
\end{smallmatrix}\right)\right)=
\left(\begin{smallmatrix}
1 & \alpha x+\varepsilon \beta y & F_{\alpha,\beta}(x,y,z)\\
0 & 1 & \beta x+ \alpha y\\
0 & 0 & 1
\end{smallmatrix}\right),$$
where $F_{\alpha,\beta}(x,y,z)=\alpha\beta(\frac{x^2}{2}+\varepsilon\frac{y^2}{2})+\varepsilon \beta^2 xy +(\alpha^2-\varepsilon \beta^2)z$. The mapping $M\mapsto \rho(M)$ is a monomorphism from $\mathcal{M}$ to $\aut(G)$ (see~\cite{MR}). Let $K=\{\rho(M):~M\in \mathcal{M}\}$. The group $K$ is a cyclic group of order~$q^2-1$. Put $\mathcal{A}=\cyc(K,G)$. The basic sets of $\mathcal{A}$ are the following:
$$\{e\},~Z^\#,~Y_i,~i\in \mathbb{F}_q,$$
where 
$$Y_i=\left\{\left(\begin{smallmatrix}
1 & \alpha & \gamma_i(\alpha,\beta)\\
0 & 1 & \beta\\
0 & 0 & 1
\end{smallmatrix}\right),~\alpha,\beta\in \mathbb{F}_q,~(\alpha,\beta)\neq (0,0)\right\}$$ 
for $\gamma_i(\alpha,\beta)=\frac{\alpha\beta}{2}+(\alpha^2-\varepsilon \beta^2)i$. One can see that $|Y_i|=q^2-1$ and $Y_i^{(-1)}=Y_{-i}$ for every $i\in \mathbb{F}_q$. 

Given $i\in \mathbb{F}_q$, put $X_i=Y_i\cup\{e\}$. The next lemma immediately follows from~\cite[Corollary~6.4]{MR}.

\begin{lemm}\label{transver}
For every $i\in \mathbb{F}_q$, 
$$\underline{X_i}\cdot \underline{X_i}^{(-1)}=q^2e+q(\underline{G}-\underline{Z}),$$
i.e. $X_i$ is a DDS with parameters~$(q^2,q,q^2,0,q)$ which is a transversal for $Z$ in $G$.
\end{lemm}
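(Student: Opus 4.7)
The plan is a direct computation in the group algebra, working with the explicit parametrization $X_i = \{(\alpha,\beta,\gamma_i(\alpha,\beta)) : \alpha,\beta \in \mathbb{F}_q\}$, where I write $(\alpha,\beta,z)$ for the matrix in $G$ with $(1,2)$-entry $\alpha$, $(2,3)$-entry $\beta$, and $(1,3)$-entry $z$. Since $\gamma_i(0,0) = 0$, we have $e \in X_i$ and $|X_i| = q^2 = [G:Z]$. The transversal claim is then immediate: a $Z$-coset in $G$ is determined by its first two coordinates, so distinct pairs $(\alpha,\beta)$ yield elements of $X_i$ in distinct cosets, and $X_i$ meets each coset exactly once.

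For the product formula I would fix $g \in G$ and count the number $N(g)$ of pairs $(a,b) \in X_i \times X_i$ with $ab^{-1} = g$, aiming to show $N(e) = q^2$, $N(g) = 0$ for $g \in Z^\#$, and $N(g) = q$ for $g \notin Z$. The case $g = e$ is trivial. For $g \in Z^\#$, the relation $a = gb$ implies $aZ = bZ$, so by the transversal property $a = b$ and $g = e$, a contradiction; hence $N(g) = 0$, yielding $\lambda_1 = 0$.

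For the generic case $g = (\xi,\eta,\zeta) \notin Z$ with $(\xi,\eta) \neq (0,0)$, writing $b = (x,y,\gamma_i(x,y))$ and multiplying matrices gives $gb = (x+\xi,\, y+\eta,\, \gamma_i(x,y) + \xi y + \zeta)$. The condition $gb \in X_i$ is that the third coordinate equal $\gamma_i(x+\xi, y+\eta)$, and expanding the quadratic $\gamma_i$ reduces this to a single $\mathbb{F}_q$-linear equation in $(x,y)$ of the form
$$\bigl(\tfrac{\eta}{2} + 2\xi i\bigr)\,x - \bigl(\tfrac{\xi}{2} + 2\varepsilon\eta i\bigr)\,y = \zeta - \gamma_i(\xi,\eta).$$

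The main (though brief) obstacle is to verify that the linear form on the left is not identically zero, for then this equation has exactly $q$ solutions and hence $N(g) = q$, giving $\lambda_2 = q$. If both coefficients vanished, eliminating $\eta$ would give $\xi(1 - 16\varepsilon i^2) = 0$: either $\xi = 0$ (whence $\eta = 0$, contradicting $g \notin Z$), or $\varepsilon = (4i)^{-2}$ would be a square, contradicting the choice of $\varepsilon$. This is the only place where the nonsquare hypothesis on $\varepsilon$ is used. Collecting the three counts gives $\underline{X_i}\cdot\underline{X_i}^{(-1)} = q^2 e + q(\underline{G} - \underline{Z})$, proving both assertions of the lemma.
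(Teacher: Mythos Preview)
Your argument is correct. The transversal claim is immediate from the parametrization, the counts $N(e)=q^2$ and $N(g)=0$ for $g\in Z^\#$ follow as you say, and the linear equation you derive for $g=(\xi,\eta,\zeta)\notin Z$ is exactly right; the nondegeneracy check via $\xi(1-16\varepsilon i^2)=0$ and the nonsquare hypothesis on $\varepsilon$ is the correct way to finish (and it covers $i=0$ as well, since then $1-16\varepsilon i^2=1$ forces $\xi=0$ directly).

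The paper does not actually argue this lemma at all: it simply records that the statement ``immediately follows from \cite[Corollary~6.4]{MR}'', i.e.\ it defers to the authors' earlier work where the orbit sets of $K$ are analyzed in the group algebra. Your direct computation is thus a genuinely different (and self-contained) route: rather than invoking the machinery of the cyclotomic $S$-ring built in \cite{MR}, you verify the DDS identity by an elementary count in the explicit coordinates of $H_3(q)$. What you gain is independence from the external reference and a transparent explanation of why the nonsquare condition on $\varepsilon$ is needed; what the citation buys is brevity and a uniform treatment alongside the other structure-constant computations (Lemma~\ref{consts}) that are also imported from \cite{MR}.
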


Given $i,j\in \mathbb{F}_q^\infty=\mathbb{F}_q\cup \{\infty\}$, put 
$$\chi(i)=
\begin{cases}
-i,~i\neq \infty,\\
\infty,~i=\infty,\\
\end{cases}$$ 
and
$$\psi(i,j)=
\begin{cases}
\frac{ij+\delta}{i+j},~i+j\neq 0,~i,j\neq \infty,\\
\infty,~j=-i,\\
i,~i\neq\infty,~j=\infty,\\
j,~i=\infty,~j\neq \infty,\\
\end{cases}$$ 
where $\delta=\frac{\varepsilon}{16}$.

\begin{lemm}\cite[Lemma~6.7]{MR}\label{cyclgroup}
The set $\mathbb{F}_q^\infty$ equipped with the binary operation $\psi$ forms a cyclic group of order~$q+1$, where $\infty$ is the identity element and $\chi(i)$ is the element inverse to~$i\in \mathbb{F}_q^\infty$. 
\end{lemm}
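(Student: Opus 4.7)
The plan is to construct an explicit group isomorphism between $(\mathbb{F}_q^\infty, \psi)$ and the unique subgroup of order $q+1$ in $\mathbb{F}_{q^2}^*$, namely the norm-one subgroup $U = \ker N$ with $N(w) = w \cdot w^q$. Since $q$ is odd and $\varepsilon$ is a nonsquare in $\mathbb{F}_q$, so is $\delta = \varepsilon/16$; fix a square root $\theta$ of $\delta$ in $\mathbb{F}_{q^2}\setminus\mathbb{F}_q$. The candidate isomorphism is the Cayley-type map
$$\Phi \colon \mathbb{F}_q^\infty \longrightarrow U, \qquad \Phi(i) = \frac{i - \theta}{i + \theta} \ \text{ for } i \in \mathbb{F}_q, \qquad \Phi(\infty) = 1,$$
whose denominator never vanishes because $\theta \notin \mathbb{F}_q$.

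First I would check that $\Phi$ lands in $U$: the Frobenius $\sigma\colon x \mapsto x^q$ sends $\theta$ to $-\theta$, and a direct computation gives $\sigma(\Phi(i)) = \Phi(i)^{-1}$, so $N(\Phi(i)) = \Phi(i)\sigma(\Phi(i)) = 1$. Next, $\Phi$ is injective by a one-line cross-multiplication using $\mathrm{char}(\mathbb{F}_q)\neq 2$, and since $|\mathbb{F}_q^\infty| = q+1 = |U|$, it is automatically a bijection. The main technical step is verifying $\Phi(\psi(i,j)) = \Phi(i)\Phi(j)$: for $i,j\in\mathbb{F}_q$ with $i+j\neq 0$, expanding both sides (using $\theta^2 = \delta$) yields the common expression
$$\frac{ij + \delta - (i+j)\theta}{ij + \delta + (i+j)\theta},$$
after canceling the factor $i+j$ that appears in the numerator and denominator of $\Phi(\psi(i,j))$. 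The three degenerate branches of the piecewise definition of $\psi$ are handled separately: for $j = -i$ one checks $\Phi(i)\Phi(-i) = (\delta - i^2)/(\delta - i^2) = 1 = \Phi(\infty)$, and the cases with one of $i,j$ equal to $\infty$ are immediate from $\Phi(\infty) = 1$.

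Once $\Phi$ is shown to be a group isomorphism, the cyclicity of $U$ (being a finite subgroup of the multiplicative group of a field) transfers to $(\mathbb{F}_q^\infty, \psi)$, giving a cyclic group of order $q+1$. The identity is $\Phi^{-1}(1) = \infty$, and for $i\in\mathbb{F}_q$ one computes $\Phi(-i) = (-i-\theta)/(-i+\theta) = (i+\theta)/(i-\theta) = \Phi(i)^{-1}$, identifying $\chi(i) = -i$ with the inverse of $i$; the case $\chi(\infty) = \infty$ is trivial. I do not foresee any genuine obstacle: the whole proof reduces to one short algebraic identity plus careful bookkeeping across the branches of the piecewise definition of $\psi$, so the main effort is organizing the cases cleanly rather than producing any new idea.
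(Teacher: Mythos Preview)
Your argument is correct: the Cayley-type map $\Phi(i)=(i-\theta)/(i+\theta)$ with $\theta^2=\delta$ gives a bijection onto the norm-one torus $U\subset\mathbb{F}_{q^2}^*$, and the verification $\Phi(\psi(i,j))=\Phi(i)\Phi(j)$ across all four branches is clean and complete. The only spot worth a one-line remark is that $\delta-i^2\neq 0$ in the $j=-i$ case, which holds because $\delta$ is a nonsquare; you use this implicitly.

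As for comparison: the paper does not prove this lemma at all---it is quoted verbatim from \cite[Lemma~6.7]{MR} with no argument given here. So your proposal is not a variant of the paper's proof but rather a self-contained substitute for an external citation. This is a net gain in readability: the isomorphism with the norm-one subgroup is the natural conceptual explanation for why $(\mathbb{F}_q^\infty,\psi)$ is cyclic of order $q+1$, and it also makes transparent why $\chi(i)=-i$ is the inverse (conjugation $\theta\mapsto-\theta$ inverts $\Phi(i)$).
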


The lemma below collects an information on the structure constants of $\mathcal{A}$ from~\cite[Eqs.~(9),(10),(14),(16),(17)]{MR} which we will need further.

\begin{lemm}\label{consts}
Given $i,j,k\in \mathbb{F}_q$, 
$$c_{Y_iY_j}^{Z^\#}=
\begin{cases}
0,~j=-i,\\
q+1,~j\neq-i,\\
\end{cases}
$$
$$c_{Y_iY_{-i}}^{Y_k}=
\begin{cases}
q,~k\notin\{i,-i\},\\
q-1,~k\in\{i,-i\}~\text{and}~i\neq 0,\\
q-2,~k=i=-i=0,
\end{cases}$$
and if $i+j\neq 0$, then
$$
c_{Y_iY_j}^{Y_k}=
\begin{cases}
q+1,~k\notin\{i,j,\psi(i,j)\},\\
1,~k=\psi(i,j),\\
q,~i\neq j,~\text{and}~k\in\{i,j\},\\
q-1,~k=i=j.
\end{cases}
$$
\end{lemm}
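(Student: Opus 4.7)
The plan is to compute each structure constant $c^{Z}_{XY}$ from its combinatorial definition — for a fixed $z \in Z$, the number of pairs $(x,y) \in X \times Y$ with $xy = z$ — using the explicit matrix parametrization of $Y_i$ by triples $(\alpha, \beta, \gamma_i(\alpha,\beta))$. A direct matrix multiplication yields
\[
AB \;=\; \bigl(\alpha_1+\alpha_2,\ \beta_1+\beta_2,\ \gamma_i(\alpha_1,\beta_1)+\alpha_1\beta_2+\gamma_j(\alpha_2,\beta_2)\bigr)
\]
for $A \in Y_i$ and $B \in Y_j$. The key tool throughout will be the norm form $N(x,y) = x^2 - \varepsilon y^2$ of $\mathbb{F}_{q^2}/\mathbb{F}_q$, which is anisotropic since $\varepsilon$ is a non-square, and whose surjective norm map $\mathbb{F}_{q^2}^{*} \to \mathbb{F}_q^{*}$ has fibers of size $q+1$.

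For $c^{Z^\#}_{Y_iY_j}$, the condition $AB \in Z$ forces $(\alpha_2,\beta_2) = -(\alpha_1,\beta_1)$, and (using $\gamma_j(-\alpha,-\beta) = \gamma_j(\alpha,\beta)$) the $(1,3)$-entry of $AB$ reduces to $N(\alpha_1,\beta_1)(i+j)$. If $i+j = 0$, this vanishes identically, so $AB = e$ and nothing contributes, giving the value $0$; if $i+j \neq 0$, fixing any $z \in Z^\#$ with non-zero entry $c$ reduces the count to the number of non-zero $(\alpha_1,\beta_1)$ with $N(\alpha_1,\beta_1) = c/(i+j)$, which is exactly $q+1$.

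For $c^{Y_k}_{Y_iY_{-i}}$, the cleanest approach is to invoke Lemma~\ref{transver}: expanding $\underline{X_i}\cdot\underline{X_{-i}} = (\underline{Y_i}+e)(\underline{Y_{-i}}+e)$ and using $\underline{G} - \underline{Z} = \sum_{k\in\mathbb{F}_q}\underline{Y_k}$ yields
\[
\underline{Y_i}\cdot\underline{Y_{-i}} \;=\; (q^2-1)\,e \;+\; q\sum_{k\in\mathbb{F}_q}\underline{Y_k} \;-\; \underline{Y_i} - \underline{Y_{-i}},
\]
from which all three sub-cases of the formula for $c^{Y_k}_{Y_iY_{-i}}$ are read off. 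The value $q-2$ when $i=k=0$ arises because $\underline{Y_0}=\underline{Y_i}=\underline{Y_{-i}}$ is then subtracted twice in the identity above.

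The bulk of the argument concerns $c^{Y_k}_{Y_iY_j}$ with $i+j \neq 0$. Fixing $z \in Y_k$ with coordinates $(\alpha_0,\beta_0)$ and substituting $(\alpha_2,\beta_2)=(\alpha_0-\alpha_1,\beta_0-\beta_1)$ collapses the $(1,3)$-condition to a single affine conic in $(\alpha_1,\beta_1)$ whose leading form is the anisotropic $(i+j)N(\alpha_1,\beta_1)$. Completing the square transforms it to $X^2 - \varepsilon Y^2 = D$ with $D$ depending on $k$; the number of $\mathbb{F}_q$-points is $1$ when $D=0$ and $q+1$ otherwise. A direct evaluation shows $D=0$ iff $k = \psi(i,j)$, which (since $\delta = \varepsilon/16$ is a non-square in $\mathbb{F}_q$, being a square multiple of the non-square $\varepsilon$) differs from both $i$ and $j$, so the unique point is admissible, giving the ``$1$''-case. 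For the remaining $k$, the $q+1$ conic points must be further screened: the points $(\alpha_1,\beta_1) = (0,0)$ and $(\alpha_1,\beta_1) = (\alpha_0,\beta_0)$ lie on the conic exactly when $k=j$ and $k=i$ respectively (by direct substitution), and each corresponds to the inadmissible pair $A=e$ or $B=e$, removing $1$ from the count. This produces $q+1$ when $k \notin \{i,j,\psi(i,j)\}$, $q$ when $k \in \{i,j\}$ and $i\neq j$, and $q-1$ when $k=i=j$. The main obstacle is the bookkeeping required to match $D=0$ with the explicit formula for $\psi(i,j)$ given the constant $\delta = \varepsilon/16$; the triangular identity~\eqref{triangle} supplies a convenient consistency check across the various cases.
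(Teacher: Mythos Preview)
The paper does not prove this lemma; the sentence introducing it says the structure constants are quoted from \cite[Eqs.~(9),(10),(14),(16),(17)]{MR}. Your proposal instead gives a direct, self-contained count, and the approach is correct: the Heisenberg product formula, the reduction of the $Z^\#$-case to the fiber size of the norm map $\mathbb{F}_{q^2}^*\to\mathbb{F}_q^*$, the slick derivation of the $j=-i$ case by expanding $\underline{X_i}\,\underline{X_{-i}}$ via Lemma~\ref{transver}, and in the main case $i+j\neq 0$ the reduction to point-counting on the anisotropic conic $X^2-\varepsilon Y^2=D$ together with the admissibility check at $(\alpha_1,\beta_1)\in\{(0,0),(\alpha_0,\beta_0)\}$ all go through exactly as you describe. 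In particular, your verification that $(0,0)$ lies on the conic iff $k=j$ and $(\alpha_0,\beta_0)$ lies on it iff $k=i$, combined with $\psi(i,j)\notin\{i,j\}$ because $\delta$ is a non-square, cleanly separates the four sub-cases. The paper's route keeps the exposition short by deferring to the companion paper; yours makes the result independently verifiable and exposes the role of the anisotropic norm form explicitly, at the cost of the ``bookkeeping'' you flag --- which, since the identification of the unique $k$ with $D=0$ is the only place where the exact constant $\delta=\varepsilon/16$ enters, is worth writing out in full rather than leaving as a remark.
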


\begin{prop}\label{aut}
In the above notations, $\aut(\mathcal{A})=G_\r\rtimes K$.
\end{prop}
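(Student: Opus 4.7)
The plan is to reduce the proposition to showing $\aut(\mathcal{A})_e=K$, since the inclusion $G_\r\rtimes K\le\aut(\mathcal{A})$ is automatic from the cyclotomic presentation $\mathcal{A}=\cyc(K,G)$. Because $Z$ is an $\mathcal{A}$-subgroup, the $Z$-cosets form a system of blocks of $\aut(\mathcal{A})$, and I will work with the induced action $\pi\colon\aut(\mathcal{A})_e\to\sym(G/Z)$. The argument then splits into proving that $\pi$ is injective and identifying its image with the natural $K$-action on $G/Z$.

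For injectivity, suppose $f\in\ker\pi$. Then $f$ fixes every $Z$-coset setwise and, since $f\in\aut(\mathcal{A})_e$, every basic set setwise. By Lemma~\ref{transver}, each $Y_i$ meets every coset $gZ\ne Z$ in exactly one element, so $f$ is the identity on $G\setminus Z$. Preservation of the relation $r(Y_i)$ then forces $f|_Z=\id$: for $z\in Z^\#$ and $y\in Y_i$, the pair $(f(z),yz)=(f(z),f(yz))$ must lie in $r(Y_i)$, so $yz\cdot f(z)^{-1}$ is the unique element of $Y_i$ in the coset $yZ$, namely $y$, which forces $f(z)=z$.

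To identify the image of $\pi$, I would parameterize $G\setminus Z$ as $(G/Z)^\#\times\mathbb{F}_q$ by sending $(u,i)$ to the unique element of $Y_i$ in the coset $u$. Because $\aut(\mathcal{A})_e$ stabilizes each $Y_i$, every $f\in\aut(\mathcal{A})_e$ acts as $(u,i)\mapsto(\bar f(u),i)$, where $\bar f=\pi(f)$ fixes $0$. A direct calculation in $H_3(q)$ translates the condition $g_2 g_1^{-1}\in Y_i$ (for $g_k=(u_k,i_k)$) into a single rational equation for $i$ in terms of $(u_1,u_2,i_1,i_2)$; its coefficients involve the norm form $N(\alpha,\beta)=\alpha^2-\varepsilon\beta^2$ on $\mathbb{F}_q^2\cong\mathbb{F}_{q^2}$ and the symplectic form $\omega((\alpha_1,\beta_1),(\alpha_2,\beta_2))=\alpha_1\beta_2-\alpha_2\beta_1$. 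Preservation of every $r(Y_i)$ forces the resulting equation to be $\bar f$-equivariant; specializing $(i_1,i_2)\in\{(0,0),(1,0),(0,1)\}$ and comparing the identities isolates a single scalar $c\in\mathbb{F}_q^*$ satisfying
\[
N(\bar f(u))=c\,N(u),\qquad \omega(\bar f(u_1),\bar f(u_2))=c\,\omega(u_1,u_2)
\]
for all $u,u_1,u_2\in\mathbb{F}_q^2$.

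Finally, polarizing the $N$-identity (using $q$ odd, so $2$ is invertible) yields $B(\bar f(u),\bar f(v))=c\,B(u,v)$ for the nondegenerate polar form $B$ of $N$; reading $\bar f(u)$ off against a fixed basis through this identity shows that $\bar f$ is $\mathbb{F}_q$-linear. Identifying $\mathbb{F}_q^2\cong\mathbb{F}_{q^2}$, the $\mathbb{F}_q$-linear similitudes of $N$ split into the two families $x\mapsto ax$ and $x\mapsto ax^q$ with $a\in\mathbb{F}_{q^2}^*$; the Frobenius-type family multiplies $\omega$ by $-c$ instead of $c$, which is incompatible with the second identity for $q$ odd. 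Hence $\bar f$ is multiplication by some element of $\mathbb{F}_{q^2}^*$, i.e.\ $\bar f$ lies in the image of $K$, and injectivity of $\pi$ upgrades this to $\aut(\mathcal{A})_e=K$. The main obstacle is the bookkeeping in the third paragraph: setting up the rational formula for $i$ and cleanly disentangling the $N$- and $\omega$-constraints from the three specializations of $(i_1,i_2)$; once those two identities are in hand, the remaining classification is a classical fact about similitudes of the anisotropic form on $\mathbb{F}_q^2$.
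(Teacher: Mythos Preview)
Your approach is correct and genuinely different from the paper's. The paper reduces to showing that the two-point stabilizer $\aut(\mathcal{A})_{e,y_0}$ is trivial, which it does by a direct combinatorial argument: once $e$ and $y_0$ are fixed, every element of $Z$ and of $Zy_0$ is fixed (via the transversal property), and then any $y\in Y_1\setminus Zy_0$ is pinned down as the unique point of an explicit intersection of five translates of basic sets, the system~\eqref{intersectionbig} being solved by hand. Your route is more structural: rather than fixing a second point, you analyse the full stabilizer through its action on $G/Z$, convert preservation of the relations $r(Y_i)$ into the similitude constraints $N(\bar f(u))=cN(u)$ and $\omega(\bar f(u_1),\bar f(u_2))=c\,\omega(u_1,u_2)$, and then appeal to the classification of similitudes of the anisotropic norm form on $\mathbb{F}_{q^2}$. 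The paper's argument is elementary and entirely self-contained; yours explains \emph{why} the answer is exactly $K$ (it is the group of orientation-preserving norm-similitudes) and would adapt more readily to variants of the construction, at the cost of invoking an outside classification.

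One small repair is needed in your linearity step. Polarizing the identity $N(\bar f(u))=cN(u)$ alone does \emph{not} yield $B(\bar f(u),\bar f(v))=cB(u,v)$, since you do not yet know that $\bar f(u+v)=\bar f(u)+\bar f(v)$. The fix is already implicit in your derivation: the scalar $c$ arises precisely as $N(\bar f(u_2)-\bar f(u_1))/N(u_2-u_1)$ (this ratio is what the three specializations force to be independent of $u_1,u_2$), so you in fact have the stronger identity $N(\bar f(u)-\bar f(v))=cN(u-v)$, and polarizing \emph{that} together with $N(\bar f(u))=cN(u)$ does give the $B$-identity and hence linearity. Alternatively, the $\omega$-identity alone already forces $\bar f$ to be linear by the same ``read off against a basis'' argument, since $\omega$ is nondegenerate and genuinely bilinear from the start.
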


\begin{proof}
The $S$-ring $\mathcal{A}$ is cyclotomic and hence schurian. So each basic set of $\mathcal{A}$ is an orbit of the stabilizer $\aut(\mathcal{A})_e$. Obviously, $G_\r\rtimes K\leq \aut(\mathcal{A})$. One can see that 
$$|\aut(\mathcal{A})|=|G||\aut(\mathcal{A})_e|=|G||Y_0||\aut(\mathcal{A})_{e,y_0}|=q^3(q^2-1)|\aut(\mathcal{A})_{e,y_0}|=|G||K||\aut(\mathcal{A})_{e,y_0}|,$$
where $y_0=\left(\begin{smallmatrix}
1 & 1 & 0\\
0 & 1 & 0\\
0 & 0 & 1
\end{smallmatrix}\right)\in Y_0$. So to prove that $\aut(\mathcal{A})=G_\r\rtimes K$, it suffices to check that the stabilizer $\aut(\mathcal{A})_{e,y_0}$ is trivial. Let us prove that every $f\in \aut(\mathcal{A})_{e,y_0}$ is trivial.

\begin{lemm}\label{blocks0}
In the above notations, $f^{Z}=\id_{Z}$.
\end{lemm}

\begin{proof}
Let $z\in Z$ and $Y$ a basic set containing $zy_0^{-1}$. Note that $\{z\}=Yy_0\cap Z^\#$ because $Y\cup \{e\}$ is a transversal for $Z$ (Lemma~\ref{transver}). Therefore 
$$\{f(z)\}=f(Yy_0\cap Z^\#)=Yy_0\cap Z^\#=\{z\},$$
where the second equality holds by $f(y_0)=y_0$ and Eq.~\eqref{autring}. Thus, $f(z)=z$ for every $z\in Z$ and hence $f^{Z}=\id_{Z}$.
\end{proof}

\begin{lemm}\label{blocks}
In the above notations, if $f(y)=y$ for some $y\in G\setminus Z$, then $f(Zy)=Zy$ and $f^{Zy}=\id_{Zy}$. In particular, $f^{Zy_0}=\id_{Zy_0}$. 
\end{lemm}

\begin{proof}
Since $Z$ is an $\mathcal{A}$-subgroup, the set $Zy$ is a block of $\aut(\mathcal{A})$. So due to $f(y)=y$, we have $f(Zy)=Zy$. Each $X_i$ is a transversal for $Z$ by Lemma~\ref{transver}. Therefore all elements from $Zy$ belong to pairwise distinct basic sets. Together with $f(Zy)=Zy$, this implies that
$f^{Zy}=\id_{Zy}$.
\end{proof}

In view of Lemma~\ref{transver}, Lemma~\ref{blocks0}, and Lemma~\ref{blocks}, it remains to verify that $f$ fixes every $y\in Y_1\setminus Zy_0$. Since $X_0$ and $X_{-1}$ are transversals for $Z$ and $y\notin Zy_0$, there exist $i,j,k,l\in \{0,\ldots,q-1\}$ such that 
$$y\in T=Y_0z^i\cap Y_0y_0z^j\cap Y_{-1}z^k\cap Y_{-1}y_0z^l\cap Y_1.$$
If $|T|=1$ or, equivalently, $T=\{y\}$, then we are done. Indeed,
$$\{f(y)\}=f(T)=T=\{y\},$$
where the second equality holds by Eq.~\eqref{autring}, Lemma~\ref{blocks0}, and Lemma~\ref{blocks}. 

Further, we establish that $|T|=1$. Let $t\in T$. Then $t=\left(\begin{smallmatrix}
1 & \alpha & \gamma_1(\alpha,\beta)\\
0 & 1 & \beta\\
0 & 0 & 1
\end{smallmatrix}\right)\in Y_1$ for some $\alpha,\beta\in \mathbb{F}_q$. Let 
$$u_1=\left(\begin{smallmatrix}
1 & \alpha_1 & \gamma_0(\alpha_1,\beta_1)\\
0 & 1 & \beta_1\\
0 & 0 & 1
\end{smallmatrix}\right),~u_2=\left(\begin{smallmatrix}
1 & \alpha_2 & \gamma_0(\alpha_2,\beta_2)\\
0 & 1 & \beta_2\\
0 & 0 & 1
\end{smallmatrix}\right)\in Y_0$$ and 
$$u_3=\left(\begin{smallmatrix}
1 & \alpha_3 & \gamma_{-1}(\alpha_3,\beta_3)\\
0 & 1 & \beta_3\\
0 & 0 & 1
\end{smallmatrix}\right),~u_4=\left(\begin{smallmatrix}
1 & \alpha_4 & \gamma_{-1}(\alpha_4,\beta_4)\\
0 & 1 & \beta_4\\
0 & 0 & 1
\end{smallmatrix}\right)\in Y_{-1}$$ such that
$$t=u_1z^i=u_2y_0z^j=u_3z^k=u_4y_0z^l.$$
The latter equalities are equivalent to the system
\begin{equation}\label{intersectionbig}
\begin{cases}
\alpha=\alpha_1=\alpha_2+1=\alpha_3=\alpha_4+1,\\
\beta=\beta_1=\beta_2=\beta_3=\beta_4,\\
\gamma_1(\alpha,\beta)=\gamma_0(\alpha_1,\beta_1)+i=\gamma_0(\alpha_2,\beta_2)+j=\gamma_{-1}(\alpha_3,\beta_3)+k=\gamma_{-1}(\alpha_4,\beta_4)+l.\\
\end{cases}
\end{equation}
One can see that $\alpha_i,\beta_i$ can be expressed via $\alpha,\beta$ using the equations from the first and second lines of System~\eqref{intersectionbig} and these expressions can be substituted to the equations from the third line of System~\eqref{intersectionbig}. As a result, we obtain
$$\alpha^2-\varepsilon \beta^2=i=-\frac{\beta}{2}+j=-(\alpha^2-\varepsilon \beta^2)+k=-\frac{\beta}{2}-((\alpha-1)^2-\varepsilon \beta^2)=l.$$
In particular, $\beta=2(j-i)$ from the second equality and $\alpha=\frac{k+1+\frac{\beta}{2}}{2}$ from the fourth one. Therefore System~\eqref{intersectionbig} has at most one solution and hence $|T|\leq 1$. On the other hand, $y\in T$. Thus, $T=\{y\}$ as required.
\end{proof}

\begin{corl}\label{2min}
The $S$-ring $\mathcal{A}$ is normal and $2$-minimal.
\end{corl}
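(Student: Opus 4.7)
The plan is to extract both statements directly from the description $\aut(\mathcal{A}) = G_\r \rtimes K$ established in Proposition~\ref{aut}. Normality is essentially a tautology at this point: by construction $K \leq \aut(G)$, so $\aut(\mathcal{A}) = G_\r \rtimes K \leq G_\r \rtimes \aut(G) = \Hol(G)$, which is exactly the definition of normality of $\mathcal{A}$ recalled in Section~2.

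For $2$-minimality, I would first observe that $\mathcal{A} = \cyc(K, G)$ is cyclotomic, hence schurian, so the definition of $2$-minimality applies. Take an arbitrary subgroup $H \leq \aut(\mathcal{A})$ whose orbits in the componentwise action on $G \times G$ coincide with those of $\aut(\mathcal{A})$; the goal is to show $H = \aut(\mathcal{A})$. Since the basic set $\{e\}$ produces the diagonal of $G \times G$ as one of these orbits, $H$ must be transitive on $G$. Consequently the orbits of $H_e$ on $G$ are exactly the basic sets of $\mathcal{A}$, and in particular the subgroup $H_e \leq \aut(\mathcal{A})_e = K$ acts transitively on $Y_0$.

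The whole argument then rests on the numerical coincidence $|Y_0| = q^2 - 1 = |K|$, which says that $K$ acts regularly on $Y_0$. Therefore any proper subgroup of $K$ is intransitive on $Y_0$, forcing $H_e = K$; combined with transitivity of $H$ on $G$ this yields $|H| = |G|\cdot |K| = |\aut(\mathcal{A})|$, so $H = \aut(\mathcal{A})$. I do not anticipate any genuine obstacle here: once Proposition~\ref{aut} is available, both claims reduce to a few lines of orbit counting, the only substantive ingredient being the regular action of $K$ on the single basic set $Y_0$.
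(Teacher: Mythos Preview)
Your proof is correct and is essentially the same as the paper's. For normality the paper simply says it ``immediately follows from Proposition~\ref{aut}''; for $2$-minimality the paper observes in one line that $|r(Y_i)| = |G||Y_i| = q^3(q^2-1) = |\aut(\mathcal{A})|$, so no proper subgroup of $\aut(\mathcal{A})$ can have $r(Y_i)$ as a single orbit on $G\times G$ --- your argument is just this orbit--stabilizer computation unpacked through the point stabilizer $H_e$ and the regular action of $K$ on $Y_0$.
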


\begin{proof}
The normality immediately follows from Proposition~\ref{aut}. Observe that the basis relations $r(Y_i)$, $i\in \mathbb{F}_q$, of the Cayley scheme $\mathcal{X}(\mathcal{A})$ have size 
$$|G|n_{r(Y_i)}=|G||Y_i|=q^3(q^2-1)=|\aut(\mathcal{A})|.$$ 
Therefore there is no a proper subgroup of $\aut(\mathcal{A})$ with the same orbits on $G\times G$ as $\aut(\mathcal{A})$ and hence $\mathcal{A}$ is $2$-minimal.
\end{proof}

\begin{prop}\label{iso}
In the above notations, $|\aut^{\ind}_{\alg}(\mathcal{A})|\leq 2\log(q)$.
\end{prop}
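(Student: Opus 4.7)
The plan is to translate $|\aut^{\ind}_{\alg}(\mathcal{A})|$ into the index of $K$ in its normalizer inside $\aut(G)$, and then bound that index by looking at how $K$ acts on the quotient $V:=G/Z$.

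Because $\mathcal{A}$ is cyclotomic (and hence schurian), Eq.~\eqref{schurnorm} gives $\iso(\mathcal{A})=N_{\sym(G)}(\aut(\mathcal{A}))$, while Proposition~\ref{aut} gives $\aut(\mathcal{A})=G_\r\rtimes K$. Since $|G_\r|=q^3$ and $|K|=q^2-1$ are coprime, the subgroup $G_\r$ is the unique Sylow $p$-subgroup of $\aut(\mathcal{A})$ and is therefore characteristic in it, so $\iso(\mathcal{A})\leq N_{\sym(G)}(G_\r)=\Hol(G)=G_\r\rtimes \aut(G)$. A direct computation in $\Hol(G)$ (using $\beta R_g\beta^{-1}=R_{\beta(g)}$ and $\aut(G)\cap (G_\r\rtimes K)=K$) shows that $R_h\beta$ normalizes $G_\r\rtimes K$ iff $\beta\in N_{\aut(G)}(K)$, so $\iso(\mathcal{A})=G_\r\rtimes N_{\aut(G)}(K)$. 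Combining this with Eq.~\eqref{algind} gives
$$|\aut^{\ind}_{\alg}(\mathcal{A})|=\frac{|\iso(\mathcal{A})|}{|\aut(\mathcal{A})|}=[N_{\aut(G)}(K):K].$$

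To bound this index, I consider the natural homomorphism $\pi:\aut(G)\rightarrow \aut(V)$ (well-defined since $Z=Z(G)$ is characteristic in $G$). The restriction of $\pi$ to $K$ is injective, and the image $\overline{K}:=\pi(K)$ coincides with the non-split torus $\mathcal{M}\leq \GL_2(q)$ from Section~3; it endows $V\cong \mF_q^2$ with the structure of a one-dimensional $\mF_{q^2}$-vector space and acts on it as the full multiplicative group $\mF_{q^2}^*$. A standard semilinear argument (or the double centralizer theorem applied to the subfield $\mF_{q^2}\leq \mathrm{End}_{\mF_p}(V)$) identifies $N_{\aut(V)}(\overline{K})$ with $\overline{K}\rtimes \mathrm{Gal}(\mF_{q^2}/\mF_p)$, a group of order $|K|\cdot 2\log(q)$. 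Since $\pi$ carries $N_{\aut(G)}(K)$ into $N_{\aut(V)}(\overline{K})$, the bound $[N_{\aut(G)}(K):K]\leq 2\log(q)$ will follow once the restriction of $\pi$ to $N_{\aut(G)}(K)$ is shown to be injective.

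Any $\alpha\in \ker(\pi)$ has the form $\alpha(g)=g\cdot L(gZ)$ for some $\mF_p$-additive map $L:V\rightarrow Z$ (the map $g\mapsto \alpha(g)g^{-1}$ factors through $V$ because $Z$ is abelian), and a short computation shows that such $\alpha$ normalizes $K$ iff $L\circ\overline{\kappa}=\kappa|_Z\circ L$ for every $\kappa\in K$. Under the identifications $V\cong \mF_{q^2}$ and $Z\cong \mF_q$, and using that $\kappa|_Z$ is multiplication by the norm $N(\lambda)=\lambda^{q+1}$ of the scalar $\lambda\in \mF_{q^2}^*$ corresponding to $\overline{\kappa}$, the condition becomes $L(\lambda v)=N(\lambda)L(v)$. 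Fixing a nonzero $v_0\in V$ and setting $c=L(v_0)$ forces $L(\lambda v_0)=cN(\lambda)$, and $\mF_p$-additivity of $L$ then requires $N$ to be additive on $\mF_{q^2}$ up to the factor $c$. But the defect
$$N(\lambda_1+\lambda_2)-N(\lambda_1)-N(\lambda_2)=\lambda_1\lambda_2^q+\lambda_1^q\lambda_2$$
is a nonzero trace form on $\mF_{q^2}$, so $c=0$, $L\equiv 0$, and $\alpha=\id$, which gives the required injectivity.

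The main obstacle is the trivialization of $\ker(\pi)\cap N_{\aut(G)}(K)$ via the non-additivity of the norm; beyond that step the factor $2\log(q)$ in the bound is produced exactly by the Galois group $\mathrm{Gal}(\mF_{q^2}/\mF_p)$ acting semilinearly on the one-dimensional $\mF_{q^2}$-space $V$.
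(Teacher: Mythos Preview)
Your proof is correct and follows the same overall architecture as the paper's: reduce $|\aut^{\ind}_{\alg}(\mathcal{A})|$ to $[N_{\aut(G)}(K):K]$ via Eq.~\eqref{algind}, Eq.~\eqref{schurnorm} and Proposition~\ref{aut}, then push everything through $\pi:\aut(G)\to\aut(G/Z)$, where the image of $K$ is a Singer subgroup whose normalizer has index $2\log(q)$. Two of the technical steps are executed differently, however. To obtain $N_{\Hol(G)}(G_\r\rtimes K)=G_\r\rtimes N_{\aut(G)}(K)$, the paper invokes the Schur--Zassenhaus theorem to conjugate $K^\tau$ back to $K$ inside $G_\r\rtimes K$ and then argues that the conjugating element must be trivial; your observation $\aut(G)\cap(G_\r\rtimes K)=K$ yields the same conclusion by a one-line intersection argument. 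More substantially, to show $\ker(\pi)\cap N_{\aut(G)}(K)=1$ the paper uses the combinatorial input of Lemma~\ref{transver} (each $K$-orbit outside $Z$ meets every $Z$-coset once) to deduce that any such element centralizes $K$, and then shows $C_{\aut(G)}(K)=K$ and $K\cap\ker(\pi)=1$; you instead parametrize $\ker(\pi)$ explicitly by additive maps $L:V\to Z$, derive the intertwining condition $L(\lambda v)=N(\lambda)L(v)$, and kill $L$ via the non-additivity of the norm $\lambda\mapsto\lambda^{q+1}$. Your route is more self-contained algebraically (no appeal to Lemma~\ref{transver} or Schur--Zassenhaus), while the paper's route leverages the $S$-ring structure already in hand.
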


\begin{proof}
Note that $|\aut^{\ind}_{\alg}(\mathcal{A})|=|\iso(\mathcal{A})|/|\aut(\mathcal{A})|$ by Eq.~\eqref{algind}. From Eq.~\eqref{schurnorm} it follows that $\iso(\mathcal{A})=N_{\sym(G)}(\aut(\mathcal{A}))$. By Proposition~\ref{aut}, we have $\aut(\mathcal{A})=G_\r\rtimes K$, where $|K|=q^2-1$. So the group $G_\r$ is a normal Sylow subgroup of $\aut(\mathcal{A})$. In particular, $G_\r$ is characteristic in $\aut(\mathcal{A})$. Since $\aut(\mathcal{A})$ is normal in $\iso(\mathcal{A})$, we conclude that $G_\r$ is normal in $\iso(\mathcal{A})$ and hence $\iso(\mathcal{A})\leq N_{\sym(G)}(G_\r)=\Hol(G)$. Thus,
$$\iso(\mathcal{A})\leq N_{\sym(G)}(\aut(\mathcal{A})) \cap \Hol(G)=N_{\Hol(G)}(G_\r\rtimes K).$$

The group $N_{\Hol(G)}(G_\r\rtimes K)$ coincides with $G_\r\rtimes N_{\aut(G)}(K)$. Indeed, $g_\r\tau\in \Hol(G)$, where $g\in G$ and $\tau\in \aut(G)$, normalizes the group $G_\r\rtimes K$ if and only if $G_\r\rtimes K^\tau=G_\r\rtimes K$. Clearly, if $\tau\in N_{\aut(G)}(K)$, then the latter equality holds. Suppose that $G_\r\rtimes K^\tau=G_\r\rtimes K$. Since $\GCD(|G|,|K|)=1$ and $G$ is solvable, the Schur-Zassenhaus theorem implies that $K$ and $K^\tau$ are conjugate in $G_\r\rtimes K$. So $K^{h_\r}=K^\tau$ for some $h\in G$. Then for every $\sigma \in K$, we have $h_\r\sigma h_\r^{-1}(e)=\sigma(h^{-1})h=e$ because $h_\r\sigma h_\r^{-1}\in K^\tau\leq \aut(G)$. This yields that $h$ is fixed by every element of $K$ and hence the basic set of $\mathcal{A}$ containing $h$ is of size~$1$. The unique basic set of $\mathcal{A}$ of size~$1$ is $\{e\}$. So $h=e$ and, consequently, $K^\tau=K$. Therefore $G_\r\rtimes K^\tau=G_\r\rtimes K$ if and only if $\tau\in N_{\aut(G)}(K)$ and hence $N_{\Hol(G)}(G_\r\rtimes K)=G_\r\rtimes N_{\aut(G)}(K)$. Thus,
$$\iso(\mathcal{A})\leq G_\r\rtimes N_{\aut(G)}(K).$$

Due to the latter equality and $\aut(\mathcal{A})=G_\r\rtimes K$, the statement of the proposition is a consequence of inequality
\begin{equation}\label{double}
|N_{\aut(G)}(K)|\leq 2\log(q)|K|
\end{equation}
which we are going to prove. Let $\pi$ be the homomorphism from $\aut(G)$ to $\aut(G/Z)$ which maps every $\sigma\in\aut(G)$ to the induced automorphism  of $G/Z$ acting by the rule $\sigma(Zg)=Z\sigma(g)$. The group $G/Z$ is isomorphic to the additive group of $(2l)$-dimensional vector space over $\mathbb{F}_p$, where $q=p^l$ for an odd prime $p$ and $l=\log(q)$, and hence 
$$\aut(G/Z)\cong \GL_{2l}(p).$$
The group $\pi(K)$ is a cyclic subgroup of $\aut(G/Z)$ of order~$q^2-1$. Moreover, $\pi(K)$ acts transitively on the set of nonidentity elements of $G/Z$ because each orbit of $K$ outside $Z$ together with the identity element is a transversal for $Z$ in $G$ (Lemma~\ref{transver}). Therefore $\pi(K)$ is a Singer subgroup of $\aut(G/Z)\cong \GL_{2l}(p)$. From~\cite[p.~187]{Hup} it follows that
\begin{equation}\label{singer}
|N_{\aut(G/Z)}(\pi(K))|=2l|\pi(K)|.
\end{equation}

Let $N_0=N_{\aut(G)}(K)\cap \ker(\pi)$ and $\theta\in N_0$. Then $\theta\sigma\theta^{-1}\in K$ for every $\sigma \in K$. Let $g\in G\setminus Z$. The elements $\theta\sigma\theta^{-1}(g)$ and $\sigma(g)$ lie in the same orbit of $K$ because $\theta\in N_{\aut(G)}(K)$ and in the same $Z$-coset, namely in $\sigma(Zg)$, because $\theta\in \ker(\pi)$. So $\theta\sigma\theta^{-1}(g)=\sigma(g)$ by Lemma~\ref{transver}. Since the above equality holds for all $g\in G\setminus Z$, we conclude that $\theta\sigma\theta^{-1}=\sigma$ for every $\sigma \in K$. Therefore $\theta\in C_{\aut(G)}(K)$ and hence $N_0\leq C_{\aut(G)}(K)$. The group $K$ is regular on each of its orbits outside $Z$. This yields that $C_{\aut(G)}(K)$ is semiregular on each orbit of $K$ outside $Z$. So $|C_{\aut(G)}(K)|\leq |K|$. One can see that $K$ is abelian and consequently $K\leq C_{\aut(G)}(K)$. Therefore $C_{\aut(G)}(K)=K$. The latter implies that $N_0\leq K_0$, where $K_0=K\cap \ker(\pi)$. However, each orbit of $K$ outside $Z$ together with the identity element is transversal for $Z$ in $G$ which implies that $K_0$ is trivial. Thus,
\begin{equation}\label{ker} 
|N_0|=|K_0|=1.
\end{equation}
Now one can estimate $|N_{\aut(G)}(K)|$ as follows:
$$|N_{\aut(G)}(K)|=|N_{\aut(G)}(K)/N_0|=|\pi(N_{\aut(G)}(K))|\leq |N_{\aut(G/Z)}(\pi(K))|=2l|\pi(K)|=2l|K|,$$
where the first and fifth equalities hold by Eq.~\eqref{ker}, whereas the fourth equality holds by Eq.~\eqref{singer}. Thus, Eq.~\eqref{double} holds and we are done. 
\end{proof}

Given $\tau\in \aut(\mathbb{F}_q^\infty)$, let us define a bijection $\widehat{\tau}$ on the set $\mathcal{S}(\mathcal{A})$ as follows:
$$\widehat{\tau}(\{e\})=\{e\},~\widehat{\tau}(Y_k)=Y_{\tau(k)},~k\in \mathbb{F}_q^\infty,$$
where $Y_{\infty}=Z^\#$.

\begin{prop}\label{autalg}
The following statements hold.
\begin{enumerate}

\tm{1} The mapping $\tau \mapsto \widehat{\tau}$ is a monomorphism from $\aut(\mathbb{F}_q^\infty)$ to $\aut_{\alg}(\mathcal{A})$.

\tm{2} $|\aut_{\alg}(\mathcal{A})|\geq\phi(q+1)$.

\tm{3} Given generators $i,j$ of $\mathbb{F}_q^\infty\cong C_{q+1}$, there is an algebraic automorphism of $\mathcal{A}$ which maps $Y_i$ to $Y_j$.

\end{enumerate} 
\end{prop}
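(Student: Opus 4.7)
The plan is to prove (1) by a direct verification that $\widehat\tau$ preserves the structure constants of $\mathcal A$, and then to deduce (2) and (3) as immediate corollaries. For (2), $\aut(\mathbb F_q^\infty)=\aut(C_{q+1})$ has order $\phi(q+1)$ by Lemma~\ref{cyclgroup}, so the monomorphism in (1) delivers the bound. For (3), given two generators $i,j$ of the cyclic group $\mathbb F_q^\infty\cong C_{q+1}$, any $\tau\in\aut(C_{q+1})$ with $\tau(i)=j$ yields $\widehat\tau(Y_i)=Y_j$.

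For (1), the preliminary observation is that every $\tau\in\aut(\mathbb F_q^\infty)$ automatically fixes two distinguished elements: the identity $\infty$ (so $\widehat\tau(Z^\#)=Z^\#$) and the unique involution of $C_{q+1}$, which, since $q$ is odd, is the element $0\in\mathbb F_q$---the relation $\psi(0,0)=\infty$ confirms that $0$ has order~$2$. Hence $\widehat\tau$ fixes $\{e\}$, $Z^\#$, and $Y_0$, while permuting the remaining $Y_i$ (with $i\in\mathbb F_q\setminus\{0\}$) among themselves. Injectivity of $\tau\mapsto\widehat\tau$ is clear from the recovery $\tau(k)=l\iff\widehat\tau(Y_k)=Y_l$.

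The verification of structure constants is then a case inspection. When the third argument is $\{e\}$, one has $c_{XY}^{\{e\}}=|X|$ if $Y=X^{(-1)}$ and $0$ otherwise, and $Y_k^{(-1)}=Y_{\chi(k)}$ is $\tau$-equivariant because $\tau$ respects $\chi$. For $X=Y_i$, $Y=Y_j$ with $i,j\in\mathbb F_q$ and the third argument in $\{Y_k\}_{k\in\mathbb F_q}\cup\{Z^\#\}$, I would read each formula off Lemma~\ref{consts} and check that every case condition is intrinsically group-theoretic---namely, whether $j=\chi(i)$, the value of $\psi(i,j)$, and equalities among $i,j,k,\psi(i,j),0$---hence preserved by $\tau$. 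The remaining structure constants, those with $Z^\#$ as a first or second factor, are reduced to the cases already treated via the triangle identity~\eqref{triangle}, using that the sizes $|Y_k|$ are $\tau$-invariant (all $Y_k$ with $k\in\mathbb F_q$ share the common size $q^2-1$, while $|Z^\#|=q-1$ is fixed since $\tau(\infty)=\infty$). The main obstacle---more subtle than technical---will be confirming that the special formula $c_{Y_0Y_0}^{Y_0}=q-2$ in Lemma~\ref{consts} does not spoil $\tau$-invariance: it singles out one particular element of $\mathbb F_q$, but that element is precisely the unique involution of $C_{q+1}$ and is therefore fixed by every $\tau\in\aut(\mathbb F_q^\infty)$. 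Apart from this point, the proof is a routine tabulation against Lemma~\ref{consts}.
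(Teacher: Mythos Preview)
Your proposal is correct and follows essentially the same approach as the paper: the paper's proof of (1) also amounts to a direct verification of the structure constants via Lemma~\ref{consts} and the triangle identity~\eqref{triangle} (the paper merely states this verification is ``straightforward'' and exhibits one sample computation), and (2) and (3) are derived exactly as you do. Your explicit observation that $0$ is the unique involution of $\mathbb{F}_q^\infty$---hence fixed by every $\tau$, so the exceptional constant $c_{Y_0Y_0}^{Y_0}=q-2$ causes no trouble---is the one nontrivial point in the tabulation and is left implicit in the paper.
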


\begin{proof}
Statement~$(1)$ immediately follows from the the definition of $\widehat{\tau}$ and inclusion $\widehat{\tau}\in \aut_{\alg}(\mathcal{A})$ which can be verified straightforwardly using Eq.~\eqref{triangle} and Lemma~\ref{consts}. For example,  
$$c_{\widehat{\tau}(Y_i)\widehat{\tau}(Y_j)}^{\widehat{\tau}(Y_{\psi(i,j)})}=c_{Y_{\tau(i)}Y_{\tau(j)}}^{Y_{\tau(\psi(i,j))}}=c_{Y_{\tau(i)}Y_{\tau(j)}}^{Y_{\psi(\tau(i),\tau(j))}},$$
where the latter equality holds because $\tau\in \aut(\mathbb{F}_q^\infty)$. Statement~$(2)$ holds by Statement~$(1)$ and Lemma~\ref{cyclgroup}. Since $i$ and $j$ are generators of $\mathbb{F}_q^\infty$, there is $\tau\in \aut(\mathbb{F}_q^\infty)$ such that $\tau(i)=j$. So $\widehat{\tau}(Y_i)=Y_{\tau(i)}=Y_j$ and Statement~$(3)$ holds by Statement~$(1)$.
\end{proof}

\begin{corl}\label{nonsep}
The $S$-ring $\mathcal{A}$ is separable only if $q\in\{3,5,9\}$.
\end{corl}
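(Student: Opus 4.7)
The plan is to reduce separability to a single numerical inequality on $q$ obtained by combining the two preceding propositions, and then finish by an elementary arithmetic check.

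First, I would note that if $\mathcal{A}$ is separable then every algebraic isomorphism from $\mathcal{A}$ to any $S$-ring is induced by a combinatorial isomorphism; in particular every algebraic automorphism is induced, so $\aut^{\ind}_{\alg}(\mathcal{A}) = \aut_{\alg}(\mathcal{A})$. Combining the lower bound of Proposition~\ref{autalg}(2) with the upper bound of Proposition~\ref{iso} then yields
$$\phi(q+1) \leq |\aut_{\alg}(\mathcal{A})| = |\aut^{\ind}_{\alg}(\mathcal{A})| \leq 2\log(q).$$
Writing $q=p^l$, the problem reduces to the purely number-theoretic task of classifying odd prime powers $q$ for which $\phi(p^l+1) \leq 2l$.

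For the arithmetic step I would invoke the standard elementary bound $\phi(n) \geq \sqrt{n}$, which is valid for every positive integer $n$ outside the two exceptions $n \in \{2,6\}$ (the proof proceeds by factoring $n$ and checking that each prime-power contribution $(p-1)p^{a/2-1}$ is at least $1$ except for $p^a = 2$). Applied to $n = q+1$, this gives $p^l \leq 4l^2 - 1$ for every odd prime power $q \neq 5$. An immediate case analysis on $(p,l)$ leaves only the pairs $(3,1),(3,2),(3,3)$, i.e.\ $q \in \{3,9,27\}$, and the exceptional value $q = 5$ must be adjoined to the candidate list by hand. A direct computation then verifies $\phi(q+1) \leq 2\log(q)$ for $q \in \{3,5,9\}$ and refutes it at $q=27$, where $\phi(28) = 12 > 6 = 2\log_3(27)$.

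The only subtle point in the whole argument is the role of the exceptional value $q=5$: it survives in view of the numerical coincidence $\phi(6)=2$, which is exactly the case where the generic bound $\phi(n) \geq \sqrt{n}$ fails. Every other aspect is routine, since for $p \geq 5$ already $l=1$ breaks $p^l \leq 4l^2-1$, and for $p = 3$ the inequality fails for every $l \geq 4$ by an elementary induction.
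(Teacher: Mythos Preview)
Your proof is correct and follows exactly the same approach as the paper: separability forces $\aut_{\alg}(\mathcal{A})=\aut^{\ind}_{\alg}(\mathcal{A})$, so Propositions~\ref{autalg}(2) and~\ref{iso} give $\phi(q+1)\leq 2\log(q)$, which holds precisely for $q\in\{3,5,9\}$. The paper states the last arithmetic fact without justification, whereas you supply a clean verification via the bound $\phi(n)\geq\sqrt{n}$ for $n\notin\{2,6\}$; this added detail is welcome and correct.
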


\begin{proof}
Due to Proposition~\ref{iso} and Proposition~\ref{autalg}(2), the $S$-ring $\mathcal{A}$ is separable only if $\phi(q+1)\leq 2\log(q)$ which is true if and only if $q\in\{3,5,9\}$.
\end{proof}

\section{Proof of Theorem~\ref{main}}

Throughout this section, we use the notations from the previous one. Given $i\in \mathbb{F}_q$, put $\Gamma_i=\cay(G,X_i)$. By Lemma~\ref{transver}, $\Gamma_i$ is a divisible design digraph with parameters~$(q^3,q^2,0,q,q^2,q)$ whose classes are $Z$-cosets and in which each vertex dominates exactly one vertex from each class. Denote the set of all generators of $\mathbb{F}_q^\infty\cong C_{q+1}$ by $I$. Clearly, $|I|=\phi(q+1)$.

\begin{lemm}\label{wlclosure}
In the above notations, $\WL(\Gamma_i)=\mathcal{A}$ for every $i\in I$.
\end{lemm}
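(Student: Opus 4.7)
The plan is to prove $\mathcal{B} = \mathcal{A}$ by double inclusion, where $\mathcal{B} = \WL(\Gamma_i)$ viewed as an $S$-ring. The inclusion $\mathcal{B} \leq \mathcal{A}$ is immediate: $Y_i$ is a basic set of $\mathcal{A}$, so $X_i = \{e\} \cup Y_i$ is an $\mathcal{A}$-set, and by the minimality characterization of the WL-closure as an $S$-ring stated in the paragraph before Lemma~\ref{wliso}, one gets $\mathcal{B} \leq \mathcal{A}$.

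For the reverse inclusion, I would show that each basic set of $\mathcal{A}$ --- namely $\{e\}$, $Z^\#$, and the $Y_k$ with $k \in \mathbb{F}_q$ --- is a $\mathcal{B}$-set. Starting from the known $\mathcal{B}$-set $Y_i = X_i \setminus \{e\}$, the strategy is to generate the remaining $Y_k$ inductively via the products $\underline{Y_i} \cdot \underline{Y_{i^n}}$, where $i^n$ denotes the $n$-th power of $i$ in the cyclic group $(\mathbb{F}_q^\infty, \psi)$ of order $q+1$ (Lemma~\ref{cyclgroup}). Using Lemma~\ref{consts}, the square $\underline{Y_i}^2$ has coefficients only in $\{0, 1, q-1, q+1\}$, which are pairwise distinct for $q \geq 3$, with the value $1$ attained exclusively on $Y_{i^2}$; Lemma~\ref{sw} then yields $Y_{i^2} \in \mathcal{B}$. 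Inductively, as long as $i^{n+1} \neq \infty$, the product $\underline{Y_i} \cdot \underline{Y_{i^n}}$ has coefficients in $\{0, 1, q, q+1\}$ with $1$ occurring solely on $Y_{i^{n+1}}$, giving $Y_{i^{n+1}} \in \mathcal{B}$. Since $i$ generates $\mathbb{F}_q^\infty$ of order $q+1$, the exponents $n = 1, \ldots, q$ produce all of $\mathbb{F}_q \subseteq \mathbb{F}_q^\infty$, so every $Y_k$ with $k \in \mathbb{F}_q$ is a $\mathcal{B}$-set. Finally, $Z^\# = G \setminus (\{e\} \cup \bigcup_{k \in \mathbb{F}_q} Y_k)$ is a $\mathcal{B}$-set as a difference of $\mathcal{B}$-sets.

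The main obstacle will be careful bookkeeping: correctly identifying which subcase of Lemma~\ref{consts} applies at each induction step (hinging on whether $i = i^n$ or $i^{n+1} = \infty$), and verifying that the listed coefficient values are pairwise distinct for all admissible $q$. The essential use of the assumption $i \in I$ (rather than an arbitrary nonzero element of $\mathbb{F}_q$) is exactly that $i$ has order $q+1$, so that the iterated powers $i^n$ traverse all of $\mathbb{F}_q$ before cycling back to $\infty$.
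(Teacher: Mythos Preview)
Your proposal is correct and follows essentially the same approach as the paper's proof: both establish $\mathcal{B}\leq\mathcal{A}$ by minimality, then recover each $Y_{i^n}$ inductively from the coefficient $1$ in the product $\underline{Y_{i^n}}\cdot\underline{Y_i}$ via Lemma~\ref{consts} and Lemma~\ref{sw}, and finally obtain $Z^\#$ as the complement. The only cosmetic difference is that the paper stops the induction at $n=\frac{q+1}{2}$ and obtains the remaining $Y_{a_k}$ as inverses $Y_{a_l}^{(-1)}$, whereas you run the induction all the way to $n=q$; both variants work since the obstruction $i^{n+1}=\infty$ first occurs at $n=q$.
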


\begin{proof}
Put $\mathcal{B}=\WL(\Gamma_i)$. The $S$-ring $\mathcal{B}$ is the smallest $S$-ring over $G$ for which $Y_i$ is a $\mathcal{B}$-set. So $\mathcal{B}\leq \mathcal{A}$. Let us check the reverse inclusion. Put 
$$a_1=i~\text{and}~a_k=\psi(a_{k-1},i)~\text{for}~k\geq 2.$$
Since $i$ is a generator of $\mathbb{F}_q^\infty$, we conclude that $a_{q+1}=\infty$ and
\begin{equation}\label{generat} 
\{a_1,\ldots,a_{q+1}\}=\mathbb{F}_q^\infty. 
\end{equation}

Further we are going to prove that $Y_{a_k}$ is a $\mathcal{B}$-set for every $k\in\{1,\ldots,\frac{q+1}{2}\}$. We proceed by induction on~$k$. The base of induction for $k=1$ follows from the definition of $\mathcal{B}$. Observe that $a_k\neq -i$ for $k\in\{1,\ldots,\frac{q+1}{2}\}$ because $-i=\chi(i)$ and hence $-i=a_q$. Together with Lemma~\ref{consts} this implies that
$$\underline{Y_{a_k}}\cdot \underline{Y_i}=\underline{Y_{a_{k+1}}}+q(\underline{Y_{a_k}}+\underline{Y_i})+(q+1)(\underline{G}^\#-\underline{Y_{a_{k+1}}}-\underline{Y_{a_k}}-\underline{Y_i}).$$
By the induction hypothesis, the left-hand side of the above equality belongs to $\mathcal{B}$. So $\underline{Y_{a_{k+1}}}\in \mathcal{B}$ by Lemma~\ref{sw} and hence $Y_{a_{k+1}}$ is a $\mathcal{B}$-set. For every $k\in \{\frac{q+1}{2},\ldots,q\}$, there is $l\in \{1,\ldots,\frac{q+1}{2}\}$ such that $Y_{a_k}=Y_{\chi(a_l)}=Y_{-a_l}=Y_{a_l}^{(-1)}$. Therefore $Y_{a_{k}}$ is a $\mathcal{B}$-set for every $k\in \{\frac{q+1}{2},\ldots,q\}$. The above discussion together with Eq.~\eqref{generat} yields that $Y_i$ is a $\mathcal{B}$-set for every $i\in \mathbb{F}_q$. Finally, 
$$Z^\#=G^\#\setminus \bigcup \limits_{i\in \mathbb{F}_q} Y_i$$
is a $\mathcal{B}$-set. Thus, every basic set of $\mathcal{A}$ is a $\mathcal{B}$-set. This implies that $\mathcal{B}\geq \mathcal{A}$ and hence $\mathcal{B}=\mathcal{A}$ as desired.
\end{proof}

\begin{corl}\label{wlgamma}
For all $i,j\in I$, the digraphs $\Gamma_i$ and $\Gamma_j$ are normal, arc-transitive, and $\WL$-equivalent.
\end{corl}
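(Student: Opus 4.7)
The plan is to combine the WL-closure identification from Lemma~\ref{wlclosure} with the structural results on $\aut(\mathcal{A})$ and $\aut_{\alg}(\mathcal{A})$ from Section~$3$. First I would observe that since $\aut(\WL(\Gamma))=\aut(\Gamma)$ for any digraph and since $\WL(\Gamma_i)=\mathcal{X}(\mathcal{A})$ by Lemma~\ref{wlclosure}, we get
$$\aut(\Gamma_i)=\aut(\mathcal{A})=G_\r\rtimes K$$
by Proposition~\ref{aut}. Normality of $\Gamma_i$ is then immediate from the semidirect product structure: $G_\r$ is a normal subgroup of $G_\r\rtimes K=\aut(\Gamma_i)$.

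Next I would establish arc-transitivity. The group $G_\r$ acts regularly on vertices, so every (non-loop) arc of $\Gamma_i$ is in the $G_\r$-orbit of an arc $(e,x)$ with $x\in Y_i$. It therefore suffices to show that the stabilizer of $e$ in $\aut(\Gamma_i)$ acts transitively on the out-neighbors $Y_i$ of $e$. This stabilizer is exactly $K$, and $Y_i$ is, by construction, a single $K$-orbit (being a basic set of the cyclotomic $S$-ring $\mathcal{A}=\cyc(K,G)$). Hence $\aut(\Gamma_i)$ is transitive on proper arcs.

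Finally, for WL-equivalence, I would invoke Lemma~\ref{wlprop}: it suffices to produce an algebraic isomorphism $\varphi$ from $\WL(\Gamma_i)$ to $\WL(\Gamma_j)$ mapping the arc relation $D_i$ of $\Gamma_i$ to the arc relation $D_j$ of $\Gamma_j$. Since both WL-closures equal $\mathcal{X}(\mathcal{A})$ by Lemma~\ref{wlclosure}, what is needed is an algebraic automorphism of $\mathcal{A}$ sending the $\mathcal{A}$-set $X_i=\{e\}\cup Y_i$ to $X_j=\{e\}\cup Y_j$. Both $i,j\in I$ are generators of $\mathbb{F}_q^\infty\cong C_{q+1}$, so Proposition~\ref{autalg}(3) supplies $\widehat{\tau}\in\aut_{\alg}(\mathcal{A})$ with $\widehat{\tau}(Y_i)=Y_j$; by construction $\widehat{\tau}(\{e\})=\{e\}$, so $\widehat{\tau}$ sends $X_i$ to $X_j$ and hence the basis relation $r(X_i)=D_i$ to $r(X_j)=D_j$. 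Applying Lemma~\ref{wlprop} concludes the proof.

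The only mildly delicate point is the identification $\aut(\Gamma_i)=\aut(\mathcal{A})$, which hinges on the WL-closure computation in Lemma~\ref{wlclosure}; once that is granted, normality, arc-transitivity, and WL-equivalence all follow transparently from the already established structure of $\aut(\mathcal{A})$ and $\aut_{\alg}(\mathcal{A})$, so no serious obstacle remains.
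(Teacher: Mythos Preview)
Your proof is correct and follows essentially the same route as the paper: identify $\aut(\Gamma_i)=\aut(\mathcal{A})=G_\r\rtimes K$ via Lemma~\ref{wlclosure} and Proposition~\ref{aut} to get normality and arc-transitivity, and invoke Proposition~\ref{autalg}(3) together with Lemma~\ref{wlprop} for WL-equivalence. The only quibble is terminological: $r(X_i)$ is a \emph{relation} of $\mathcal{X}(\mathcal{A})$ (the union of the diagonal and $r(Y_i)$), not a basis relation, but since algebraic isomorphisms extend to relations this does not affect the argument.
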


\begin{proof}
Due Lemma~\ref{wlclosure}, we have $\WL(\Gamma_i)=\WL(\Gamma_j)=\mathcal{A}$ and hence $\aut(\Gamma_i)=\aut(\Gamma_j)=\aut(\mathcal{A})$. So $\aut(\Gamma_i)=\aut(\Gamma_j)=G_\r\rtimes K$ by Proposition~\ref{aut}. This implies that $\Gamma_i$ and $\Gamma_j$ are normal. The group $K=\aut(\Gamma_i)_e=\aut(\Gamma_j)_e$ acts transitively on $Y_i$ and $Y_j$ which are neighborhoods of~$e$ in $\Gamma_i$ and $\Gamma_j$, respectively. Since $\Gamma_i$ and $\Gamma_j$ are vertex-transitive, this yields that $\Gamma_i$ and $\Gamma_j$ are arc-transitive. Finally, $\Gamma_i$ and $\Gamma_j$ are $\WL$-equivalent by Lemma~\ref{wlprop}, Proposition~\ref{autalg}(3), and Proposition~\ref{wlclosure}.
\end{proof}

\begin{lemm}\label{nonisogamma}
There are at least $\left[\frac{\phi(q+1)}{2\log(q)}\right]$ pairwise nonisomorphic digraphs among $\Gamma_i$, $i\in I$.
\end{lemm}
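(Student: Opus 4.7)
The plan is to partition the index set $I$ into graph-isomorphism classes and show that each class has size at most $2\log(q)$. Since $|I|=\phi(q+1)$, this will yield at least $\phi(q+1)/(2\log(q))$ isomorphism classes, i.e.\ at least $\left[\frac{\phi(q+1)}{2\log(q)}\right]$ pairwise nonisomorphic digraphs, as required. The essential tools are already in place: the WL-closure identification $\WL(\Gamma_i)=\mathcal{X}(\mathcal{A})$ (Lemma~\ref{wlclosure}), the behavior of graph isomorphisms under $\WL$ (Lemma~\ref{wliso}), and the bound $|\aut^{\ind}_{\alg}(\mathcal{A})|\leq 2\log(q)$ (Proposition~\ref{iso}).

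First I would show that any graph isomorphism $f\colon\Gamma_i\to\Gamma_j$ with $i,j\in I$ belongs to $\iso(\mathcal{A})$. Indeed, by Lemma~\ref{wlclosure} both $\WL(\Gamma_i)$ and $\WL(\Gamma_j)$ coincide with $\mathcal{X}(\mathcal{A})$, so Lemma~\ref{wliso} gives $f\in \iso(\mathcal{X}(\mathcal{A}))=\iso(\mathcal{A})$. Next, I would analyze the algebraic isomorphism $\varphi_f$ of $\mathcal{A}$ induced by $f$. The arc sets decompose as $D_i=r(X_i)=r(\{e\})\cup r(Y_i)$ and $D_j=r(\{e\})\cup r(Y_j)$, and $f(D_i)=D_j$. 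Since the diagonal relation $r(\{e\})$ is preserved by every bijection, $\varphi_f(\{e\})=\{e\}$ forces $\varphi_f(Y_i)=Y_j$.

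Now fix $i\in I$ and consider the set $J_i=\{j\in I:\Gamma_j\cong\Gamma_i\}$. By the previous paragraph, the map $j\mapsto \varphi_f$ (for any chosen isomorphism $f\colon \Gamma_i\to\Gamma_j$) embeds $J_i$ into $\{\varphi\in\aut^{\ind}_{\alg}(\mathcal{A}):\varphi(Y_i)\in\{Y_k:k\in I\}\}$ via the assignment $j\mapsto Y_j=\varphi_f(Y_i)$; distinct $j$'s yield distinct $\varphi_f(Y_i)$'s and hence distinct algebraic automorphisms. Hence $|J_i|\leq |\aut^{\ind}_{\alg}(\mathcal{A})|\leq 2\log(q)$ by Proposition~\ref{iso}. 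Summing over isomorphism classes and using $|I|=\phi(q+1)$ gives the number of classes at least $\phi(q+1)/(2\log(q))$, which is the desired bound after taking the integer part.

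The whole argument is essentially bookkeeping built on the two nontrivial inputs (the WL-closure identification and the bound on $|\aut^{\ind}_{\alg}(\mathcal{A})|$), so no serious obstacle is expected. The only point that deserves a careful sentence is the observation that the algebraic isomorphism $\varphi_f$ induced by a graph isomorphism $f$ must send $Y_i$ to $Y_j$ (not to some other basic set such as $Z^\#$); this is immediate from the valency difference $1\neq q^2-1$, equivalently from the automatic preservation of the diagonal by the underlying bijection.
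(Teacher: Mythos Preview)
Your proposal is correct and follows essentially the same route as the paper: both argue that an isomorphism $f\colon\Gamma_i\to\Gamma_j$ lies in $\iso(\mathcal{A})$ via Lemmas~\ref{wliso} and~\ref{wlclosure}, that the induced $\varphi_f\in\aut^{\ind}_{\alg}(\mathcal{A})$ sends $Y_i$ to $Y_j$, and then bound each isomorphism class by $|\aut^{\ind}_{\alg}(\mathcal{A})|\leq 2\log(q)$ from Proposition~\ref{iso}. Your extra sentence justifying $\varphi_f(Y_i)=Y_j$ (via preservation of the diagonal/valency) is a reasonable clarification the paper leaves implicit.
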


\begin{proof}
The relation ``to be isomorphic'' defines an equivalence $E$ on the set $\{\Gamma_i:~i\in I\}$. It suffices to verify that there are at least $\left[\frac{\phi(q+1)}{2\log(q)}\right]$ equivalence classes of $E$. Suppose that $\Gamma_i$ and $\Gamma_j$ are isomorphic for some $i,j\in I$ and $f$ is an isomorphism from $\Gamma_i$ to $\Gamma_j$. Then $f\in \iso(\mathcal{A})$ by Lemma~\ref{wliso} and Lemma~\ref{wlclosure}. Clearly, $f$ induces an algebraic automorphism $\varphi\in \aut^{\ind}_{\alg}(\mathcal{A})$ such that $\varphi(Y_i)=Y_j$. Proposition~\ref{iso} implies that $|\aut^{\ind}_{\alg}(\mathcal{A})|\leq 2\log(q)$. Thus, each class of $E$ has size at most $2\log(q)$ and hence the number of classes is at least $\left[\frac{\phi(q+1)}{2\log(q)}\right]$ as required.
\end{proof}

\begin{prop}\label{wldim3}
Let $i\in I$. Then $\dimwl(\Gamma_i)\leq 3$. Moreover, $\dimwl(\Gamma_i)=3$ if $q>9$ and $\dimwl(\Gamma_i)=2$ if $q=3$. 
\end{prop}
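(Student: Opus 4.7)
The plan is to establish three things in order: the universal upper bound $\dimwl(\Gamma_i)\le 3$, the lower bound for $q>9$, and the refined bound for $q=3$.

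For the upper bound I apply Lemma~\ref{wldim}(2) to the one-point extension $\mathcal{X}(\mathcal{A})_e$, where $\mathcal{A}=\WL(\Gamma_i)$ by Lemma~\ref{wlclosure}. Since $\mathcal{A}$ is schurian, Lemma~\ref{onepoint}(2) identifies the fibers of $\mathcal{X}(\mathcal{A})_e$ with the basic sets $\{e\},\,Z^{\#},\,Y_0,\ldots,Y_{q-1}$. From~\eqref{autpoint} and Proposition~\ref{aut} one gets $\aut(\mathcal{X}(\mathcal{A})_e)=\aut(\mathcal{A})_e=K$; the key observation is that $K$ acts regularly on $Y_0$ (both have size $q^2-1$). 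Therefore $(\mathcal{X}(\mathcal{A})_e)_{Y_0}$ is a regular scheme, hence separable by Lemma~\ref{regularsep}. Furthermore, for every other fiber $\Delta^\prime$ the stabilizer in $K$ of any $(a,b)\in Y_0\times\Delta^\prime$ is trivial because $K_a=1$, so every $K$-orbit on $Y_0\times\Delta^\prime$ has size $|K|$ and, projected onto $Y_0$, has valency $1$. This is precisely the hypothesis of Lemma~\ref{deletefiber}, which lifts separability from $(\mathcal{X}(\mathcal{A})_e)_{Y_0}$ up to $\mathcal{X}(\mathcal{A})_e$. Lemma~\ref{wldim}(2) then gives $\dimwl(\Gamma_i)\le 3$.

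For $q>9$ the lower bound is immediate from Corollary~\ref{nonsep}: since $q\notin\{3,5,9\}$, the $S$-ring $\mathcal{A}$ is not separable, hence neither is $\WL(\Gamma_i)=\mathcal{X}(\mathcal{A})$; Lemma~\ref{wldim}(1) yields $\dimwl(\Gamma_i)>2$, and combined with the upper bound this forces $\dimwl(\Gamma_i)=3$.

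The case $q=3$ is the main obstacle, since one must prove the stronger bound $\dimwl(\Gamma_i)\le 2$, equivalently genuine separability of $\mathcal{X}(\mathcal{A})$. The general bounds do not suffice: for $q=3$ they read $2=\phi(q+1)\le|\aut_{\alg}(\mathcal{A})|$ and $|\aut^{\ind}_{\alg}(\mathcal{A})|\le 2\log(q)=2$, so $|\aut^{\ind}_{\alg}(\mathcal{A})|\le 2$ while $|\aut_{\alg}(\mathcal{A})|$ is only bounded below by~$2$. Separability requires verifying $\aut_{\alg}(\mathcal{A})=\aut^{\ind}_{\alg}(\mathcal{A})$ (which reduces to showing $|\aut_{\alg}(\mathcal{A})|=2$, i.e.\ that the embedding $\aut(\mathbb{F}_3^\infty)\hookrightarrow\aut_{\alg}(\mathcal{A})$ of Proposition~\ref{autalg}(1) is surjective) together with ruling out any $S$-ring algebraically---but not combinatorially---isomorphic to $\mathcal{A}$. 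Because $|G|=27$ and $\mathcal{A}$ has rank $5$ with basic set sizes $(1,2,8,8,8)$, these two facts are amenable to a direct case analysis (or to a short computer-algebra verification) over $S$-rings of $H_3(3)$ sharing the structure constants in Lemma~\ref{consts}. Separability then gives $\dimwl(\Gamma_i)\le 2$ via Lemma~\ref{wldim}(1), and the reverse inequality $\dimwl(\Gamma_i)\ge 2$ follows because the non-trivial vertex-transitive digraph $\Gamma_i$ cannot be distinguished from every non-isomorphic digraph by color refinement alone.
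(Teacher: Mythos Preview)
Your upper-bound argument has a genuine gap. From the fact that $K$ acts regularly on $Y_0$ you conclude that $(\mathcal{X}(\mathcal{A})_e)_{Y_0}$ is a \emph{regular scheme}, and from the triviality of $K_a$ you conclude that there are valency-$1$ \emph{basis relations} of $\mathcal{X}(\mathcal{A})_e$ from $Y_0$ to every other fiber. Neither inference is justified. All you know is $K\le\aut(\mathcal{X}(\mathcal{A})_e)$, so every basis relation of $\mathcal{X}(\mathcal{A})_e$ is a \emph{union} of $K$-orbits; the individual $K$-orbits on $Y_0\times Y_0$ (or on $Y_0\times\Delta'$) do have valency~$1$, but a basis relation may comprise several of them and thus have larger valency. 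In both places you are silently assuming that $\mathcal{X}(\mathcal{A})_e$ is schurian, and one-point extensions of schurian schemes need not be schurian.

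The paper fills exactly these two holes. For the valency-$1$ relations it does not use $K$-orbits at all: via Lemma~\ref{onepoint}(1) the set $r(Y_{\psi(j,0)})\cap(Y_0\times Y_j)$ is a \emph{relation} of $\mathcal{X}_e$, and an explicit structure-constant computation (Lemma~\ref{consts} and Eq.~\eqref{triangle}) shows it has valency~$1$, forcing it to be a single basis relation; a similar transversal argument handles $Y_0\times Z^{\#}$. Only then can Lemma~\ref{deletefiber}(1) be invoked to identify $\aut((\mathcal{X}_e)_{Y_0})$ with $K$. At that point $(\mathcal{X}_e)_{Y_0}$ is a Cayley scheme over the cyclic group $C_{q^2-1}$ with regular automorphism group, and the paper appeals to the non-trivial \cite[Corollary~4.4.4]{CP} on circulant schemes to deduce that it is schurian, hence a regular scheme, hence separable. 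Your shortcut skips both the explicit construction and this circulant-scheme result, and neither step is dispensable.

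Your treatment of $q>9$ via Corollary~\ref{nonsep} is correct (the paper argues instead through Corollary~\ref{wlgamma} and Lemma~\ref{nonisogamma}, but the two routes are equivalent here), and your handling of $q=3$ agrees with the paper's computer-assisted verification.
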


\begin{proof}
At first, let us prove that $\dimwl(\Gamma_i)\leq 3$. Due to Lemma~\ref{wldim}(2) and Lemma~\ref{wlclosure}, it suffices to verify that the one-point extension $\mathcal{X}_e$ of the Cayley scheme $\mathcal{X}=\mathcal{X}(\mathcal{A})$ is separable. Since $\mathcal{A}$ and hence $\mathcal{X}$ are schurian, $F(\mathcal{X}_e)=\mathcal{S}(\mathcal{A})$ by Lemma~\ref{onepoint}(2). Given $j\in \mathbb{F}_q^*$, put $k=k(j)=\psi(j,0)$. Lemma~\ref{onepoint}(1) implies that 
$$r_j=r(Y_{k(j)})\cap (Y_0\times Y_j)$$
is a relation of $\mathcal{X}_e$. Let $y\in Y_0$. Clearly, $yr_j=Y_ky\cap Y_j$. One can see that $|Y_ky\cap Y_j|=c_{Y_{-k}Y_j}^{Y_0}$. In view of Eq.~\eqref{triangle} and Lemma~\ref{consts}, we have $c_{Y_{-k}Y_j}^{Y_0}=c_{Y_jY_0}^{Y_k}=1$. Therefore $r_j$ is exactly a basis relation of $\mathcal{X}_e$ and $n_{r_j}=1$. Since each $X_j$ is a transversal for $Z$ in $G$ (Lemma~\ref{transver}), $|Y_jy\cap Z^\#|=1$. So all basis relations of $\mathcal{X}_e$ inside $Y_0\times Z^\#$ are of valency~$1$. The above discussion yields that for every  $\Delta\in F(\mathcal{X}_e)$, there is a basis relation of valency~$1$ inside $Y_0\times \Delta$. Thus, $\mathcal{X}_e$ is separable whenever $(\mathcal{X}_e)_{Y_0}$ so is by Lemma~\ref{deletefiber}(2).

Let us verify that $(\mathcal{X}_e)_{Y_0}$ is separable. From Lemma~\ref{deletefiber}(1) it follows that $\aut((\mathcal{X}_e)_{Y_0})\cong \aut(\mathcal{X}_e)$. Eq.~\eqref{autpoint} implies that $\aut(\mathcal{X}_e)=\aut(\mathcal{X})_e$. By Proposition~\ref{aut}, we have $\aut(\mathcal{X})=\aut(\mathcal{A})=G_\r\rtimes K$ and hence $\aut(\mathcal{X})_e=K\cong C_{q^2-1}$. Thus,
$$\aut((\mathcal{X}_e)_{Y_0})\cong C_{q^2-1}.$$

Since $Y_0$ is an orbit of $K$ and $|Y_0|=q^2-1$, the group $\aut((\mathcal{X}_e)_{Y_0})$ is regular. Therefore $(\mathcal{X}_e)_{Y_0}$ is isomorphic to a Cayley scheme over a cyclic group with a regular automorphism group. From~\cite[Corollary~4.4.4]{CP} it follows that $(\mathcal{X}_e)_{Y_0}$ is schurian and hence regular. Thus, $(\mathcal{X}_e)_{Y_0}$ is separable by Lemma~\ref{regularsep} as required.

If $q>9$, then Corollary~\ref{wlgamma} and Lemma~\ref{nonisogamma} imply that there is at least one digraph nonisomorphic to $\Gamma_i$ which is $\WL$-equivalent to $\Gamma_i$. So $\dimwl(\Gamma_i)>2$ and hence $\dimwl(\Gamma_i)=3$. If $q=3$, then is can be verified using the list of small schemes~\cite{HM} that $\mathcal{X}$ is separable and hence $\dimwl(\Gamma_i)=2$ by Lemma~\ref{wldim}(1).
\end{proof}

\begin{prop}\label{desiso}
For all $i,j\in \mathbb{F}_q$, the divisible designs $\mathcal{D}(\Gamma_i)=\dev(X_i)$ and $\mathcal{D}(\Gamma_j)=\dev(X_j)$ are isomorphic.
\end{prop}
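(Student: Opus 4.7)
The plan is to exhibit explicit bijections $\psi,\phi\colon G\to G$ for which $\psi(X_ig)=X_j\phi(g)$ holds for every $g\in G$; such a pair manifestly gives an isomorphism of the symmetric designs $\dev(X_i)\to\dev(X_j)$.

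First, I would identify an element $\left(\begin{smallmatrix}1&x&z\\0&1&y\\0&0&1\end{smallmatrix}\right)\in G$ with the triple $(x,y,z)\in\mathbb{F}_q^3$. Writing $g=(a,b,c)$ and using Lemma~\ref{transver} together with the product formula in $H_3(q)$, the block $X_ig$ coincides with the ``graph'' $\{(u,v,h_{g,i}(u,v)):(u,v)\in\mathbb{F}_q^2\}$ of the function
$h_{g,i}(u,v)=c+(u-a)b+\gamma_i(u-a,v-b),$
where we set $\gamma_i(0,0):=0$. Expanding this polynomial yields the decomposition $h_{g,i}(u,v)=C(g,i)+\tfrac{uv}{2}+L_{g,i}(u,v)+i\,Q(u,v)$, where $Q(u,v)=u^2-\varepsilon v^2$ is a fixed quadratic form independent of $g$ and $i$, $L_{g,i}$ is a linear form in $(u,v)$, and $C(g,i)$ is a scalar.

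The crucial observation is that the $i$-dependence of the quadratic part of $h_{g,i}$ involves only the fixed form $Q$. Accordingly, define $\psi(u,v,w):=(u,v,w+(j-i)Q(u,v))$, which is evidently a bijection of $G$. Then $\psi$ sends the graph of $h_{g,i}$ to the graph of $h_{g,i}+(j-i)Q$, whose quadratic part is $\tfrac{uv}{2}+j\,Q$. For the result to equal $h_{g',j}$ for some $g'=(a',b',c')$, one matches the linear and constant coefficients, arriving at the linear system
$\left(\begin{smallmatrix}-4j&1\\-1&4\varepsilon j\end{smallmatrix}\right)\left(\begin{smallmatrix}a'\\b'\end{smallmatrix}\right)=\left(\begin{smallmatrix}-4i&1\\-1&4\varepsilon i\end{smallmatrix}\right)\left(\begin{smallmatrix}a\\b\end{smallmatrix}\right)$
for $(a',b')$, with $c'$ then determined uniquely by the constant-term equation.

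The main (and essentially only nontrivial) point is to verify that the coefficient matrix on the left has nonzero determinant for every $j\in\mathbb{F}_q$. Its determinant equals $1-16\varepsilon j^2$, which equals $1$ when $j=0$, and for $j\neq 0$ equals $1$ minus a nonsquare (since $16j^2=(4j)^2$ is a nonzero square and $\varepsilon$ is a nonsquare in the odd characteristic field $\mathbb{F}_q$), hence is nonzero. Therefore $(a,b)\mapsto(a',b')$ is a linear automorphism of $\mathbb{F}_q^2$, and for each fixed $(a,b)$ the map $c\mapsto c'$ is a translation of $\mathbb{F}_q$; so $\phi\colon g\mapsto g'$ is a bijection of $G$, and $(\psi,\phi)$ yields the desired isomorphism $\dev(X_i)\to\dev(X_j)$.
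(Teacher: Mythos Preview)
Your proof is correct and follows essentially the same approach as the paper: both construct the point bijection by shifting the third coordinate by a multiple of the fixed quadratic form $Q(u,v)=u^2-\varepsilon v^2$, then solve a linear system for the block bijection, the invertibility of which hinges on the determinant $1-16\varepsilon j^2$ being nonzero because $\varepsilon$ is a nonsquare. The only cosmetic differences are that the paper reduces to the case $i=0$ (proving $\dev(X_0)\cong\dev(X_i)$) whereas you go directly from $i$ to $j$, and that your ``graph of a function'' language makes the role of the quadratic, linear, and constant parts slightly more transparent.
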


\begin{proof}
Recall that given $i\in \mathbb{F}_q$, $\dev(X_i)=(G,\mathcal{B}(X_i))$, where $\mathcal{B}(X_i)=\{X_ig:~g\in G\}$. We are going to check that $\dev(X_i)$ is isomorphic to $\dev(X_0)$ for every $i\in \mathbb{F}_q^*$ and by that to prove the lemma. To do this, it suffices to find two bijections $f,h:G\rightarrow G$ such that
\begin{equation}\label{crit}
g\in X_0g_0\Leftrightarrow f(g)\in X_ih(g_0)
\end{equation}
for all $g=\left(\begin{smallmatrix}
1 & \alpha & \gamma\\
0 & 1 & \beta\\
0 & 0 & 1
\end{smallmatrix}\right),g_0=\left(\begin{smallmatrix}
1 & \alpha_0 & \gamma_0\\
0 & 1 & \beta_0\\
0 & 0 & 1
\end{smallmatrix}\right)\in G$. One can see that $g\in X_0g_0$ if and only if $gg_0^{-1}\in X_0$. Due to the definition of $X_0$, the latter is equivalent to
\begin{equation}\label{descond} 
\gamma-\gamma_0=\frac{(\alpha-\alpha_0)(\beta+\beta_0)}{2}.
\end{equation}

Let $f:\left(\begin{smallmatrix}
1 & \alpha & \gamma\\
0 & 1 & \beta\\
0 & 0 & 1
\end{smallmatrix}\right) \mapsto  \left(\begin{smallmatrix}
1 & \alpha^\prime & \gamma^\prime\\
0 & 1 & \beta^\prime\\
0 & 0 & 1
\end{smallmatrix}\right)$
and 
$h:\left(\begin{smallmatrix}
1 & \alpha_0 & \gamma_0\\
0 & 1 & \beta_0\\
0 & 0 & 1
\end{smallmatrix}\right) \mapsto  \left(\begin{smallmatrix}
1 & \alpha_0^{\prime\prime} & \gamma^{\prime\prime}\\
0 & 1 & \beta_0^{\prime\prime}\\
0 & 0 & 1
\end{smallmatrix}\right)$ be mappings from $G$ to itself such that

\begin{equation}\label{mapf}
\begin{cases}
\alpha=\alpha^\prime,\\
\beta=\beta^\prime,\\
\gamma=\gamma^\prime-((\alpha^\prime)^2-\varepsilon (\beta^\prime)^2)i,
\end{cases}
\end{equation}

\begin{equation}\label{maph}
\begin{cases}
\alpha_0=\alpha_0^{\prime\prime}-\delta,\\
\beta_0=\beta_0^{\prime\prime}-\sigma,\\
\gamma_0=\gamma_0^{\prime\prime}+\frac{(\alpha_0^{\prime\prime}-\delta)(\beta_0^{\prime\prime}-\sigma)}{2}-\frac{\alpha_0^{\prime\prime}\beta_0^{\prime\prime}}{2}+((\alpha_0^{\prime\prime})^2-\varepsilon (\beta_0^{\prime\prime})^2)i,
\end{cases}
\end{equation}
where 
\begin{equation}\label{express}
\delta=4i\varepsilon \beta_0^{\prime\prime}~\text{and}~\sigma=4i\alpha_0^{\prime\prime}.
\end{equation}
At first, let us verify that $f$ and $h$ are bijections. Since the mapping $\gamma\mapsto \gamma^\prime$ is a translation by a function of $\alpha$ and $\beta$, we conclude that $f$ is a bijection. Due to Eqs.~\eqref{maph} and~\eqref{express}, we obtain
$$\left(\begin{matrix}
 \alpha_0\\
 \beta_0\\
\end{matrix}\right)=
 A\left(\begin{matrix}
 \alpha_0^{\prime\prime}\\
 \beta_0^{\prime\prime}\\
\end{matrix}\right),$$
where 
$$A=\left(\begin{matrix}
1 & -4\varepsilon i  \\
-4i & 1\\
\end{matrix}\right).$$
One can see that $\det(A)=1-16\varepsilon i^2\neq 0$ because $\varepsilon$ is nonsquare. Therefore the mapping $$\left(\begin{matrix}
 \alpha_0\\
 \beta_0\\
\end{matrix}\right) \mapsto
 \left(\begin{matrix}
 \alpha_0^{\prime\prime}\\
 \beta_0^{\prime\prime}\\
\end{matrix}\right)$$
is a bijection. The mapping $\gamma_0\mapsto \gamma_0^{\prime\prime}$ is a translation by a function of $\alpha_0$ and $\beta_0$ which yields that $h$ is a bijection.

Substituting expressions for $\alpha$, $\beta$, $\gamma$ via $\alpha^\prime$, $\beta^\prime$, $\gamma^\prime$ from Eq.~\eqref{mapf} and for $\alpha_0$, $\beta_0$, $\gamma_0$ via $\alpha_0^{\prime\prime}$, $\beta_0^{\prime\prime}$, $\gamma_0^{\prime\prime}$ from Eq.~\eqref{maph} to Eq.~\eqref{descond} and making computations, we obtain 
$$\gamma^\prime-\gamma_0^{\prime\prime}=\frac{(\alpha^\prime-\alpha_0^{\prime\prime})(\beta^\prime+\beta_0^{\prime\prime})}{2}+((\alpha^\prime-\alpha_0^{\prime\prime})^2-\varepsilon (\beta^\prime-\beta_0^{\prime\prime})^2)i.$$
The latter is equivalent to $f(g)h(g_0)^{-1}=\left(\begin{smallmatrix}
1 & \alpha^\prime & \gamma^\prime\\
0 & 1 & \beta^\prime\\
0 & 0 & 1
\end{smallmatrix}\right)\left(\begin{smallmatrix}
1 & \alpha_0^{\prime\prime} & \gamma^{\prime\prime}\\
0 & 1 & \beta_0^{\prime\prime}\\
0 & 0 & 1
\end{smallmatrix}\right)^{-1}\in X_i$. Thus, Eq.~\eqref{crit} holds and hence $\dev(X_0)$ and $\dev(X_i)$ are isomorphic.
\end{proof}


Theorem~\ref{main} follows from Corollary~\ref{wlgamma}, Lemma~\ref{nonisogamma}, Proposition~\ref{wldim3}, and Proposition~\ref{desiso}.


\begin{thebibliography}{list}


\bibitem{AJP} 
\emph{K.~T.~Arasu,~D.~Jungnickel,~A.~Pott}, Divisible difference sets with multiplier~$-1$, J. Algebra, \textbf{133} (1990), 35--62.




\bibitem{BJL}
\emph{T.~Beth, D.~Jungnickel, H.~Lenz}, Design Theory, 2nd edition, Cambridge University Press, Cambridge (1999).


\bibitem{BPR}
\emph{R.~Bildanov, V.~Panshin,~G.~Ryabov}, On WL-rank and WL-dimension of some Deza circulant graphs,  Graphs Combin., \textbf{37}, No.~6 (2021), 2397--2421.


\bibitem{BCN}
\emph{A.~Brouwer, A.~Cohen, A.~Neumaier}, Distance-regular graphs, Springer, Heidelberg (1989).




\bibitem{CP}
\emph{G.~Chen, I.~Ponomarenko}, Coherent configurations, Central China Normal University Press, Wuhan (2019).


\bibitem{CH}
\emph{D. Crnkovi\'{c}, W. H. Haemers}, Walk-regular divisible design graphs, Des. Codes Cryptogr., \textbf{72} (2014) 165--175.



\bibitem{CK}
\emph{D. Crnkovi\'{c}, H. Kharaghani}, Divisible design digraphs, in: Algebraic Design Theory and Hadamard Matrices, (C. J. Colbourn, Ed.), Springer Proc. Math. Stat., Springer, New York \textbf{133} (2015), 43--60.



\bibitem{CKS}
\emph{D. Crnkovi\'{c}, H. Kharaghani, A. $\check{S}$vob}, Divisible design Cayley digraphs, Discrete Math., \textbf{343}, No.~4 (2020), Article ID 111784.



\bibitem{CS}
\emph{D. Crnkovi\'{c}, A. $\check{S}$vob}, New constructions of divisible design Cayley graphs, Graphs Comb., \textbf{38} (2022) Article number 17.





\bibitem{EP}
\emph{S.~Evdokimov,~I.~Ponomarenko}, Characterization of cyclotomic schemes and normal Schur rings over a cyclic group, St. Petersburg Math. J., \textbf{14}, No.~2 (2003), 189--221.



\bibitem{FKV}
\emph{F.~Fuhlbr\"uck, J~ K\"obler, O.~Verbitsky}, Identiability of graphs with small color classes by the Weisfeiler-Leman algorithm, in: Proc. $37$th International Symposium on Theoretical Aspects of Computer Science, Dagst\"uhl Publishing, Germany (2020), 43:1--43:18.




\bibitem{GHKS}
\emph{S. Goryainov, W.H. Haemers, V.~Kabanov, L.~Shalaginov}, Deza graphs with parameters $(n,k,k-1,a)$ and $\beta=1$, J. Comb. Des., \textbf{27} (2019) 188--202. 


\bibitem{Grohe} 
\emph{M.~Gr\"ohe}, Descriptive complexity, canonisation, and definable graph structure theory, Cambridge University Press, Cambridge (2017).


\bibitem{HKM}
\emph{W. H. Haemers, H. Kharaghani, M. A. Meulenberg}, Divisible design graphs,  J. Combin. Theory, Ser. A, \textbf{118} (2011) 978--992.


\bibitem{HM}
\emph{A.~Hanaki, I.~Miyamoto}, Classification of association schemes with small number of vertices. http://math.shinshu-u.ac.jp/*hanaki/as/ (2016).


\bibitem{Hup}
\emph{B.~Huppert}, Endliche Gruppen, Springer Berlin-Heidelberg-New York, \textbf{1} (1967).


\bibitem{Kabanov}
\emph{V.~Kabanov}, New versions of the Wallis-Fon-Der-Flaass construction to create divisible design graphs, Discrete Math., \textbf{345}, No.~11 (2022) Article ID 113054.



\bibitem{KS}
\emph{V.~Kabanov,~L.~Shalaginov}, On divisible design Cayley graphs,  Art Discrete Appl. Math., \textbf{4}, No.~2  (2021), 1--9.





\bibitem{LN}
\emph{R.~Lidl,~H.~Niederreiter}, Finite fields, Cambridge University Press, Second edition (1997).


\bibitem{MR}
\emph{M.~Muzychuk,~G.~Ryabov}, Constructing linked systems of relative difference sets via Schur rings, Des. Codes Cryptogr., \textbf{92} (2024), 2615--2637.


\bibitem{PS}
\emph{D. Panasenko, L. Shalaginov}, Classification of divisible design graphs with at most 39 vertices, J. Comb. Des., \textbf{30}, No.~4 (2022), 205--219.





\bibitem{RS}
\emph{J.~B.~Rosser,~L.~Schoenfeld}, Approximate formulas for some functions of prime numbers, Illinois J. Math., \textbf{6}, No.~1 (1962), 64--94.



\bibitem{Ry} 
\emph{G.~Ryabov}, On separability of Schur rings over abelian $p$-groups, Algebra Log., \textbf{57}, No.~1 (2018), 49--68.


\bibitem{ScSp}
\emph{R.-H. Schulz, A. G. Spera}, Divisible designs and groups, Geom. Dedicata, \textbf{44} (1992), 147--157.


\bibitem{Schur}
\emph{I.~Schur}, Zur theorie der einfach transitiven Permutationgruppen, S.-B. Preus Akad. Wiss. Phys.-Math. Kl., \textbf{18}, No.~20 (1933), 598--623.


\bibitem{Weis}
\emph{B. Weisfeiler (ed.)}, On construction and identification of graphs, Springer-Verlag, Berlin-New York (1976).




\bibitem{WeisL}
\emph{B.~Weisfeiler,~A.~Leman}, Reduction of a graph to a canonical form and an algebra which appears in the process, NTI, \textbf{2}, No.~9 (1968), 12--16.



\bibitem{Wi}
\emph{H.~Wielandt}, Finite permutation groups, Academic Press, New York - London (1964).

\end{thebibliography}
\end{document}